\newtheorem{theorem}{Theorem}
\newtheorem{conjecture}{Conjecture}
\newtheorem{corollary}{Corollary}
\newtheorem{lemma}{Lemma}
\theoremstyle{remark}
\newtheorem{remark}{Remark}
\newtheorem{construction}{Construction}
\theoremstyle{definition}
\newtheorem{definition}{Definition}
\numberwithin{equation}{section}
\newcommand{\Po}{{\mathcal P}}
\newcommand{\I}{{\mathcal I}}
\newcommand{\Li}{{\mathcal L}}
\newcommand{\Cl}{{\mathcal C}}
\newcommand{\Q}{{\mathcal Q}}
\newcommand{\G}{{\mathcal G}}
\begin{document}
\title[Embedding cycles in finite planes]{Embedding cycles in finite planes}
\author{Felix Lazebnik}
\address{Department of Mathematical Sciences\\
University of Delaware\\
Newark, DE 19716.}
\email{lazebnik@math.udel.edu}
\author{Keith E. Mellinger}
\address{Department of Mathematics\\
University of Mary Washington\\
Fredericksburg, VA  22401.}
\email{kmelling@umw.edu}
\author{Oscar Vega}
\address{Department of Mathematics\\
California State University, Fresno \\ Fresno, CA 93740.}
\email{ovega@csufresno.edu}
\date{}

\subjclass[2000]{Primary 05, 51; Secondary 20} \keywords{Graph embeddings, finite affine plane, finite
projective plane, cycle, hamiltonian, pancyclic graph}

\begin{abstract}
We define and study embeddings of cycles in finite affine and projective planes. We show that for all $k$, $3\le k\le q^2$,  a $k$-cycle can be embedded in any affine plane of order $q$. We also prove a similar result for finite projective planes: for all $k$, $3\le k\le q^2+q+1$,  a $k$-cycle can be embedded in any projective plane of order $q$.
\end{abstract}

\maketitle

\section{Introduction}

Our work concerns substructures in finite affine and projective planes. In order to explain the questions we consider, we will need the following definitions and notations.

Any graph-theoretic notion not defined here may be found in Bollob\' as \cite{Bol98}. All of our graphs are finite, simple and undirected. If $G = (V,E)=(V(G), E(G))$ is a graph, then the {\it order} of $G$ is $v(G)=|V|$, the number of vertices of $G$, and the {\it size} of $G$ is $e(G)=|E|$, the number of edges in $G$. Each edge of $G$ is thought as a 2-subset of $V$. An edge $\{x,y\}$ will be denoted by $xy$ or $yx$. A vertex $v$ is {\it incident} with an edge $e$ if $v\in e$. We say that a graph $G'=(V',E')$ is a {\it subgraph of $G$},  and denote it by $G'\subset G$, if  $V'\subset V$ and $E'\subset E$. If $G' \subset G$, we will also say that $G$ {\it contains} $G'$. For a vertex $v \in V$, $N(v) = N_G(v)= \{u \in V: uv \in E\}$ denotes the {\it neighborhood} of $v$, and $deg(v) = deg_G(v) = |N(v)|$, the {\it degree} of $v$. If the degrees of all vertices of $G$ are equal to $d$, then $G$ is called $d$-{\it regular}. For a graph $F$, we say that $G$ is {\it $F$-free} if $G$ contains no subgraph isomorphic to $F$.

For $k\ge 2$, any graph isomorphic to the graph with a vertex-set $\{x_1,\ldots, x_k\}$ and an edge-set $\{x_1x_2, x_2x_3,\ldots, x_{k-1}x_k\}$ is called an $x_1x_k$-path, or a {\it $k$-path},  and we denote it by  ${\mathcal P}_k$.  The length of a path is its number of edges. For $k\ge 3$, the graph with a vertex-set $\{x_1,\ldots, x_k\}$ and edge-set $\{x_1x_2, x_2x_3, \ldots , x_{k-1}x_k, x_kx_1\}$ is called a {\it $k$-cycle},  and it is often denoted by $\Cl$ or $\Cl_k$. Any subgraph of $G$ isomorphic to a $k$-cycle is called a {\it $k$-cycle in $G$}.  The {\it girth} of a graph $G$ containing cycles, denoted by $g=g(G)$, is the length of a shortest cycle in $G$. Let $V(G)=A\cup B$ be a partition of $V(G)$,  and let every edge of $G$ have one endpoint in $A$ and another in $B$. Then $G$ is called {\it bipartite} and we denote it by $G(A,B;E)$. If $|A|=m$ and $|B|=n$, then we refer to $G$ as an $(m,n)$-bipartite graph.

All notions of incidence geometry not defined here may be found in \cite{Bu95}. A {\it partial plane} $\pi = ({\Po}, {\Li} ; \I)$ is an incidence structure with a set of points $\Po$, a set of lines $\Li$, and a symmetric binary relation of incidence $I\subseteq ({\Po}\times {\Li})\,\cup \,({\Li}\times {\Po})$ such that any two distinct points are on at most one line, and every line contains at least two points (note that we have used $\Po$ for two different object as of now: to denote a path and to denote the points on a partial plane. The usage of this symbol should be clear from the context).  The definition implies that any two lines share at most one point. We will often identify lines with the sets of points on them. We say that a partial plane $\pi ' = ({\Po}', {\Li}' ; \I')$ is a {\it subplane} of $\pi$,  denoted $\pi' \subset \pi$,  if  ${\Po}'\subset {\Po}, {\Li}'\subset{\Li}$,  and $\I'\subset \I$. If there is a line containing two distinct points $X$ and $Y$, we denote it by $XY$ or $YX$.  For $k\ge 3$,  we define a $k$-gon as a partial plane with $k$ distinct points $\{P_1,P_2, \ldots P_k\}$, with $k$ distinct lines $\{P_1P_{2}, P_2P_{3}, \ldots, P_{k-1}P_k, P_kP_1\}$, and with point and line being incident if and only if the point is on the line. A subplane of $\pi$ isomorphic to a $k$-gon is called a {\it $k$-gon in $\pi$}.  The {\it Levi graph} of a partial plane $\pi$ is its point-line bipartite incidence graph $Levi(\pi) = Levi({\Po},{\Li}; E)$, where $Pl\in E$ if and only if point $P$ is on line $l$. The Levi graph of any partial plane is $4$-cycle-free. Clearly, there exists a bijection between the set of all $k$-gons in $\pi$ and the set of $2k$-cycles in  $Levi(\pi)$.

A {\it projective plane of order $q\ge 2$}, denoted $\pi_q$, is a partial plane with every point on exactly $q+1$ lines, every line containing exactly $q+1$ points,  and having four points such that no three of them are collinear. It is easy to argue that $\pi_q$ contains $ q^2 + q + 1$ points and $q^2 + q + 1$ lines.  Let $n_q = q^2 + q + 1$.  It is easy to show that a partial plane is a projective plane of order $q$ if and only if its Levi graph is a $(q+1)$-regular graph of girth $6$ and diameter $3$. Projective planes $\pi_q$ are known to exist only when the order $q$ is a prime power. If $q\ge 9$ is a prime power but not a prime, there are always non-isomorphic planes of order $q$, and their number grows fast with $q$.  Let $PG(2,q)$ denote the {\it classical} projective plane of prime power order $q$ which can be modeled as follows:  points of $PG(2,q)$ are 1-dimensional subspaces in the 3-dimensional vector space over the finite field of $q$ elements, lines of $PG(2,q)$ are 2-dimensional subspaces of the vector space,  and a point is incident to a line if it is a subspace of it.

Removing a line from a projective plane, and removing its points from all other lines,  yields a partial plane known as an {\it affine plane}.  The line removed is often referred to as the {\it  line at infinity}, and it is denoted by $l_\infty$.  Conversely,  a projective plane of order $q$ can be obtained from an affine plane of order $q$ (i.e. having  $q+1$ lines through each point) by adding a line at infinity to it, which can be thought of as a set of $q+1$ new points, called {\it points at infinity}, which is in bijective correspondence with the set of parallel classes (also called the set of all slopes) of lines in the affine plane. 
We will use $\pi_q$ to denote a projective plane of order $q$, and $\alpha_q$ for affine planes of order $q$.

\medskip

The following problem,  stated in terms of set systems, appears in Erd\H os ~\cite{Erd79}:

\medskip

\noindent {\bf Problem 1.} \label{Problem1} {\it Is  every finite partial linear space embedded in a finite projective plane?}

\medskip

It is possible that the question was asked before,  as it was well known that every partial linear space embeds in some infinite projective plane, by a process of free closure due to Hall \cite{Hall43}.  For recent results related to the question, see Moorhouse and Williford \cite{MoorWil2009}.   Rephrased in terms of graphs,  Problem 1 is the following:

\medskip

\noindent {\bf Problem 1$^*$.} \label{Problem1'} {\it Is every finite bipartite graph without 4-cycles a subgraph of the Levi graph of a finite projective plane?}

\medskip

Thinking about cycles in Levi graphs of projective planes,  we introduced  the following notion of embedding of a graph into a partial plane,  and found it  useful.  Let $G$ be a graph and let $\pi = ({\Po}, {\Li}; \I)$ be a partial plane. Let
\[
f: V(G)\cup E(G) \to {\Po}\cup {\Li}
\]
be an injective map such that $f(V(G))\subset {\Po}$,  $f(E(G))\subset \Li$,  and for every vertex  $x$ and edge $e$ of $G$, their incidence in $G$ implies the incidence of  point $f(x)$ and line $f(e)$ in $\pi$.  We call such a map $f$ an {\it embedding of $G$ in $\pi$},  and if it exists  we say that {\it $G$ embeds in $\pi$}  and write $G\hookrightarrow \pi$.  If $G\hookrightarrow \pi$,  then adjacent vertices of $G$ are mapped to collinear points of $\pi$. Note that  if $G\hookrightarrow \pi_q$,  then $v(G)\le n_q$, $e(G)\le n_q$,  and $deg_G(x)\le q+1$ for all $x\in V(G)$.

A cycle containing all vertices of a graph is called a {\it hamiltonian cycle} of the graph, and if such exists, the graph is called a {\it hamiltonian} graph. Similarly,  if $\pi_q$ contains an $n_q$-gon, we call it  {\it hamiltonian}. A graph $G$ containing $k$-cycles of all possible lengths, $3\le k\le v(G)$, is called {\it pancyclic}. Similarly,  we say that $\pi _q$ is {\it pangonal}, if it contains $k$-gons for all $3\le k\le n_q$.  The latter is equivalent to $Levi (\pi_q)$ containing all $2k$-cycles for $3\le k\le n_q$.   Clearly,  if $G\hookrightarrow \pi_q$, a $k$-cycle in $G$ corresponds to a $k$-gon in $\pi_q$, which, in turn, corresponds to a $2k$-cycle in $Levi(\pi_q)$.  From now on we choose to be less pedantic, and will  feel free to use graph theoretic and geometric  terms interchangeably.  For example, we will say `point' for a vertex of a graph, `vertex' for a point of a partial plane, and we will speak about `path' and `cycle' in a plane, etc.

Determining whether a graph is hamiltonian,  or, more generally,  understanding what cycles it contains, is one of the central problems in graph theory,  and  it has been a subject of active research for many years. The existence of hamiltonian cycles in $\pi_q$ (or $Levi (\pi_q)$), or its pancyclicity, was addressed by several researchers. The presence of $k$-gons of some small lengths in $\pi_q$ is easy to establish.  In \cite{LMV09},  the authors presented explicit formuli for the numbers of distinct $k$-gons in every projective plane of order $q$ for $k=3, 4, 5, 6$.  Very recently, and in a very impressive way,  Voropaev \cite{Vor12} extended this list  to $k= 7, 8, 9, 10$. The existence of very special hamiltonian cycles in $PG(2,q)$ is a celebrated result of Singer \cite{Singer38}. These cycles are often referred to as the {\it Singer cycles} in $PG(2,q)$.  For $q=p$ (prime)  Schmeichel \cite{Schm89} showed by explicit constructions that $PG(2,p)$ is pancyclic, and  that  the hamiltonian cycles he constructed were different from Singer cycles.  DeMarco and Lazebnik  \cite{DL08} constructed a hamiltonian cycle in a Hall plane of order $p^2$.  Most of the known  sufficient conditions for the existence of hamiltonian cycles in graphs are effective for rather dense graphs: graphs of order $n$ and size greater that $cn^2$ for some positive constant $c$ (see a survey by Gould \cite{Gould03}).  Levi graphs of projective planes are much sparser; being $(q+1)$-regular, their size is $ (1/(2\sqrt{2}) +o(1))n^{3/2}$ for $n\to \infty$,  and that is why most techniques of proving  hamiltonicity of graphs do not apply to them.  For the same reason, upper bounds on the Tur\'an number of a $2k$-cycle,  see, e.g., Pikhurko \cite{Pik12} and and references therein, are not effective for proving the existence of $2k$-cycles in Levi graphs of projective planes for most values of $k$ (as $k$ may depend on $q$).

\medskip

A new approach for establishing hamiltonicity and the existence of shorter cycles came from probabilistic techniques and studies of cycles in random and pseudo-random graphs (we omit the definition).  See, e.g., Thomasson \cite{Thom87}, Chung, Graham and Wilson \cite{CGW89}, Frieze \cite{Fri00}, and Frieze and Krivelevich \cite{FriKriv02}.

In \cite{KS03}, Krivelevich and Sudakov explored relations between pseudo-randomness and hamiltonicity in regular non-bipartite graphs.  Some other results related to hamiltonicity and pancyclicity appeared in recent publications by Keevash and Sudakov \cite{KeeSud10}, Krivelevich, Lee and Sudakov \cite{KriLeeSud10}, and Lee and Sudakov \cite{LeeSud12}.

It is likely that proofs  in these papers can be modified to give results for (bipartite) Levi graphs of projective planes,  but the requirement on the order of the graph to be sufficiently large (as is the case in the aforementioned papers) will remain.  In this paper  we establish the pancyclicity of $\pi_q$ and $\alpha_q$, for all $q$, and our proof is constructive. \\

Our main results follow.

\begin{theorem} \label{thmlongcycles}
Let $\alpha_q$ be an affine plane of order $q\ge 2$.  Then $C_k \hookrightarrow \alpha_q$  for all $k$, $3\le k\le q^2$.
\end{theorem}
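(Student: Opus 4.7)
My plan is to prove the theorem by induction on $k$, growing the embedded cycle one vertex at a time from $k=3$ up to $k=q^2$. The base case $k=3$ is immediate: any three non-collinear points of $\alpha_q$ (which exist since $q\ge 2$) determine three pairwise distinct lines and hence form an embedded $\Cl_3$.

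For the inductive step, suppose $\Cl_k\hookrightarrow \alpha_q$ is realized by vertices $P_1,\ldots,P_k$ and pairwise distinct lines $l_1,\ldots,l_k$, where $l_i$ is the line through $P_i$ and $P_{i+1}$ (indices mod $k$). I would construct a $\Cl_{k+1}$ embedding by \emph{edge subdivision}: find an index $i$ and a point $Q\in \alpha_q\setminus V(\Cl_k)$ with $Q\notin l_i$, such that neither of the lines $QP_i$, $QP_{i+1}$ coincides with any $l_j$. Inserting $Q$ between $P_i$ and $P_{i+1}$ then yields the cycle $P_1,\ldots,P_i,Q,P_{i+1},\ldots,P_k$ with edge-lines $l_1,\ldots,l_{i-1},QP_i,QP_{i+1},l_{i+1},\ldots,l_k$, which are pairwise distinct by construction.

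The central claim is the \emph{extension lemma}: whenever $k<q^2$, a valid pair $(i,Q)$ exists. For a fixed edge-index $i$, the set of bad $Q$'s is
\[
B_i \;=\; V(\Cl_k)\,\cup\,l_i\,\cup\bigcup_{j\ne i,\,P_i\in l_j}l_j\,\cup\bigcup_{j\ne i,\,P_{i+1}\in l_j}l_j.
\]
Using that two distinct lines of $\alpha_q$ share at most one point, I would bound $|B_i|$ via inclusion-exclusion in terms of the cycle-line degrees $d_{P_i}$ and $d_{P_{i+1}}$ (the number of $l_j$'s passing through each vertex). Double counting via the identity $\sum_i d_{P_i}=\sum_j|l_j\cap V(\Cl_k)|$, together with $|l_j\cap V(\Cl_k)|\le q$, would then show that the average $|B_i|$ is strictly less than $q^2$, so some index $i$ admits a valid $Q$.

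The main obstacle is the regime when $k$ is close to $q^2$: the cycle nearly fills $\alpha_q$, candidate points are scarce, and the averaging becomes tight. For these extremal cases I expect to exploit the sharper structural fact that each point of $\alpha_q$ lies on exactly $q+1$ lines, so any particular obstruction $Q$ can block the subdivision of at most $3(q+1)$ edge-indices (the three indices $i-1,i,i+1$ for each cycle line through $Q$); since $k\gg q$, most edges remain subdividable. For small $q$, where the counting is genuinely tight, a short separate verification of a handful of configurations may be required to complete the proof.
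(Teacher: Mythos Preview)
Your approach is genuinely different from the paper's, which is a direct construction: it fixes a point $O$, uses the pencil of lines $l_0,\ldots,l_q$ through $O$ to build $q-1$ pairwise disjoint $(q+1)$-paths that partition $\alpha_q\setminus\{O\}$, links these into a small number of long ``spiral'' cycles, and then splices pieces of those cycles together (with $O$ as a hub) via a case analysis on $k\bmod(q+1)$. No induction or counting is involved.

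Your inductive subdivision strategy, however, has a real gap: the extension lemma is \emph{false}. For $q=3$, take $Q=(0,0)$ in $AG(2,3)$ and the $8$-cycle
\[
(0,1)\!-\!(0,2)\!-\!(1,2)\!-\!(2,1)\!-\!(2,2)\!-\!(1,1)\!-\!(1,0)\!-\!(2,0)\!-\!(0,1).
\]
Its eight edge-lines are $x=0$, $y=2$, $y=2x$, $x=2$, $y=x$, $x=1$, $y=0$, $y=x+1$, and these include \emph{all four} lines through $Q$. Hence for every vertex $P$ of the cycle the line $QP$ is already a cycle edge, so no edge whatsoever can be subdivided at $Q$; yet $Q$ is the only point outside the cycle. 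The induction stalls at $k=q^2-1$.

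The error in your suggested fix is the claim that a single $Q$ blocks at most $3(q+1)$ edge-indices. That bound assumes a cycle line $l_j$ through $Q$ contains only the two endpoints $P_j,P_{j+1}$ among the cycle vertices. In reality $l_j$ may carry up to $q-1$ cycle vertices, and each such vertex $P_m$ forces $QP_m=l_j$, blocking both edges $m-1$ and $m$. When $k=q^2-1$ every line through $Q$ contains exactly $q-1$ cycle vertices, so if all $q+1$ of those lines happen to be cycle edges (as in the example above), every vertex is ``bad'' and every edge is blocked. The averaging argument cannot rescue this, because there is only one candidate point $Q$ to average over. To salvage an inductive proof you would need to carry an invariant through the induction (e.g.\ that some line through the missing points is not a cycle edge), which is substantially more than what you outlined.
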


\begin{theorem}\label{main}
Let $\pi_q$ be a projective plane of order $q\ge 2$, and $n_q=q^2 + q + 1$.   Then $C_k \hookrightarrow \pi_q$  for all $k$, $3\le k\le n_q$.
\end{theorem}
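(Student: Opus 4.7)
My plan is to split the range $3 \le k \le n_q$ into two cases. For $3 \le k \le q^2$ the result is essentially immediate: $\alpha_q$ sits inside $\pi_q$, so an embedding $C_k \hookrightarrow \alpha_q$ lifts at once to an embedding $C_k \hookrightarrow \pi_q$ by regarding each affine line as the projective line obtained by adjoining its point at infinity. Theorem~\ref{thmlongcycles} therefore handles this range.

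For $k = q^2 + j$ with $1 \le j \le q+1$, the cycle must incorporate $j$ of the $q+1$ points of $l_\infty$. My approach is to start from a Hamilton cycle $H = v_0 v_1 \cdots v_{q^2-1} v_0$ of $\alpha_q$ (supplied by Theorem~\ref{thmlongcycles} with $k = q^2$) and enlarge it by two kinds of local modifications. A \emph{single insertion} replaces an edge $e_i = v_i v_{i+1}$ by a path $v_i \to Q \to v_{i+1}$, where $Q$ is a point at infinity not on $e_i$, adding one vertex and one line. A \emph{double insertion} replaces $e_i$ by a path $v_i \to Q \to Q' \to v_{i+1}$, where $Q, Q'$ are two distinct points of $l_\infty$ and the middle edge uses $l_\infty$ itself, adding two vertices and two lines. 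Performing $a$ single and $b$ double insertions at disjoint edges produces a cycle of length $q^2 + a + 2b$, so taking $(a,b) = (j,0)$ for $1 \le j \le q$ and $(a,b) = (q-1, 1)$ for $j = q+1$ covers every required length.

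The main obstacle is showing that the chosen insertions can always be carried out so that all the new lines are distinct from each other and from the surviving edges of $H$ (with each inserted $Q$ avoiding the slope of its host edge). Since $H$ uses $q^2$ of the $q(q+1)$ affine lines of $\alpha_q$, exactly $q$ affine lines are unused and distribute somehow among the $q+1$ parallel classes. The constraint at each endpoint of a single insertion is that the line through it of the chosen slope lies in this unused set; by sliding the insertion site along $H$ and choosing slopes with enough unused supply, pigeonhole should let us satisfy this condition for each insertion separately. Coordinating all $j$ insertions simultaneously — and in particular handling the double insertion in the case $j = q+1$, which forces the cycle to exhaust every affine line together with $l_\infty$ — is the delicate combinatorial step; I would handle it by a greedy procedure that processes insertions in order of scarcity of unused slope-lines and reserves the double insertion for the most constrained parallel class.
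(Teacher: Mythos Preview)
Your treatment of the range $3\le k\le q^2$ matches the paper's (this is exactly Corollary~\ref{lemprojcycles}). For $q^2<k\le n_q$, however, your insertion scheme has a genuine gap, and in fact the paper explicitly remarks that the natural idea of leveraging the affine Hamilton cycle ``could not be made use of.'' Concretely: a single insertion at an edge $e_i$ with infinite point $Q$ replaces one line by the two lines of slope $Q$ through $v_i$ and $v_{i+1}$; for these to be new you need \emph{two} lines of class $Q$ unused by $H$, and moreover they must be precisely the slope-$Q$ lines through the endpoints of some edge of $H$. Nothing in a generic pigeonhole argument guarantees this --- the $q$ unused affine lines can perfectly well fall one per parallel class, in which case no single insertion exists at all. (For the specific Hamilton cycle produced in the proof of Theorem~\ref{thmlongcycles} one can check that exactly one class, namely that of $l_{s+1}$, has two unused lines and every other class has at most one, so even there the scheme is tightly constrained from the start.) The situation is worse for $k=n_q$: a Hamiltonian cycle of $\pi_q$ must use \emph{every} line, so each of the $q$ edges you remove must reappear as one of the $2q+1$ inserted lines. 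But the two lines inserted at $e_i$ both pass through $v_i$ or $v_{i+1}$, so a removed edge $e_j$ can only be recovered at insertion $i$ if one of $v_i,v_{i+1}$ happens to lie on the line $e_j$ and $e_j$ has slope $Q_i$. Arranging this simultaneously for all $q$ removed edges is a global matching condition that your greedy outline does not address.

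The paper avoids these difficulties by \emph{not} starting from the affine Hamilton cycle. Instead it returns to the cycles $\Cl_1,\ldots,\Cl_s$ of Lemma~\ref{lemscycles}, breaks each into a path $[V_i,W_i]$, and splices in the infinite points $(1),\ldots,(s-1)$ between consecutive paths (via the lines $l_i$ and $l_i+W_i$) to get a base path $\Po$ on $q^2+s-2$ vertices. Cycles of lengths $q^2+1$ through $q^2+s+2$ are then obtained by closing $\Po$ through $O$, $(0)$, $(s)$, $(q)$ in various patterns (Lemma~\ref{lempathsq^2+1->q^2+s}), and the remaining lengths up to $n_q$ by an explicit ``zig-zag'' modification that threads in the points $(s+1),\ldots,(q)$ one at a time (Lemma~\ref{lempathsq^2+s+2<}). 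The point is that this construction keeps explicit track at every stage of exactly which lines through $O$ and which lines of $\ell_\infty$ remain available, which is what makes the Hamiltonian case go through.
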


We now proceed to give a construction for paths and cycles in \emph{any} finite affine or projective plane. We start with a remark that will be very useful later on.

\begin{remark}\label{remcombpaths}
Let $P_1 \rightarrow P_2 \rightarrow  \cdots \rightarrow  P_k$ and $Q_1\rightarrow Q_2\rightarrow  \cdots \rightarrow  Q_n$ be two disjoint (in terms of points and lines) paths embedded in $\pi_q$ or $\alpha_q$.  Then, if the line $\ell = P_kQ_n$ has not been used in these embeddings, we can create the following embedding for a path on $n+k$ vertices:
\[
P_1\rightarrow P_2\rightarrow  \cdots \rightarrow  P_k   \xrightarrow{\ell}  Q_n \rightarrow Q_{n-1} \rightarrow \cdots  Q_{2}  \rightarrow  Q_1.
\]
Here, the symbol $P_k \xrightarrow{\ell} Q_n$ indicates that the line $\ell$ joins the points $P_k$ and $Q_n$. Moreover, if the line $m = Q_1P_1$ is still available, then we get a cycle of length $k+n$ embedded in $\pi_q$ (or $\alpha_q$).\\
\end{remark}

Our main technique in the next two sections will be to construct paths that can be combined using Remark \ref{remcombpaths} to create cycles of any length.

\bigskip

\section{Cycles in Affine Planes}\label{sec:CyclesAffinePlanes}

Let $\alpha_q$ be an affine plane of order $q$, and let $O$ be any point of the plane. We label the $q+1$ lines through $O$ by $l_0, l_1, \ldots , l_{q}$. For any given point $Q\in \alpha_q$, we use $l_{i}+Q$ to denote the line parallel to $l_i$ that passes through $Q$.  Let  $ a  \mod  q+1$ denote the remainder of the division of $a$ by $q+1$.

Pick any point $P_0$ on $l_0$, different from $O$. Let $P_1$ be the point of intersection of $l_{2}+P_0$ and $l_1$. Let $P_2$ be the point of intersection of $l_3+P_1$ and $l_2$, etc. In general, let $P_i$ be the point of intersection of $l_{i+1 \bmod q+1}+P_{i-1}$ and $l_i$, for all $i=1,2,\ldots , q$.  Since $O\neq P_i \in l_i$, for all $i=1, 2, \ldots, q$, then all these points are distinct. Similarly, the lines $P_{i-1} P_{i}$ are in different parallel classes, for all $i=1, 2, \ldots, q$. It follows that by joining the points $P_{i-1}$ and $P_{i}$, for all $i=1, 2, \ldots, q$, we obtain a path on $q+1$ vertices. Denote this path by $\mathcal{P}_{P_0}$. \\

\newpage

\begin{figure}[htbp]
\begin{center}
\includegraphics[height=1.5in]{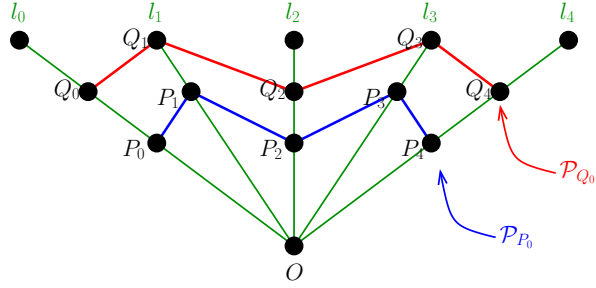} \\
\caption{Two vertex/edge disjoint paths, $\mathcal{P}_{P_0}$ and $\mathcal{P}_{Q_0}$, for $q=4$.}
\end{center}
\end{figure}

\begin{lemma}\label{lemdisjoint}
Let $P_0 \neq Q_0 \in l_0$.  Then the paths $\mathcal{P}_{P_0}$ and $\mathcal{P}_{Q_0}$ share neither points nor lines.
\end{lemma}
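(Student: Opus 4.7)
The plan is to show separately that the two paths share no points and then no lines, in both cases by exploiting the rigid structure of the construction: the $i$-th vertex of either path lies on $l_i$, and the $i$-th edge of either path lies in the parallel class of $l_{i+1 \bmod q+1}$.

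First I would handle the points. The easy observation is that if $i\neq j$, then any shared point $P_i=Q_j$ would lie in $l_i\cap l_j=\{O\}$, but the construction ensures every $P_i$ and $Q_j$ with $i,j\geq 1$ is distinct from $O$ (and $P_0,Q_0\neq O$ by hypothesis). So it suffices to prove $P_i\neq Q_i$ for each $i=0,1,\ldots,q$. I would do this by induction on $i$. The base case is the hypothesis $P_0\neq Q_0$. For the inductive step, suppose $P_{i-1}\neq Q_{i-1}$. Then the two lines $l_{i+1\bmod q+1}+P_{i-1}$ and $l_{i+1\bmod q+1}+Q_{i-1}$ lie in the same parallel class but are distinct, hence disjoint. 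Each meets $l_i$ in a unique point, namely $P_i$ and $Q_i$ respectively, and these two points must therefore differ.

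Next I would handle the lines. The edge $P_{i-1}P_i$ belongs to the parallel class of $l_{i+1\bmod q+1}$, and similarly for $Q_{j-1}Q_j$. If some edge of $\mathcal{P}_{P_0}$ coincides with some edge of $\mathcal{P}_{Q_0}$, the parallel classes must agree, forcing $i+1\equiv j+1\pmod{q+1}$, and since $1\leq i,j\leq q$ this gives $i=j$. So the only way to share a line is $P_{i-1}P_i=Q_{i-1}Q_i$ for some $i$. But this common line meets $l_{i-1}$ in a unique point, so $P_{i-1}=Q_{i-1}$, contradicting the first part.

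The only mild subtlety, which is the closest thing to an obstacle, is the wrap-around at $i=q$, where the relevant parallel class becomes $l_0$. One should check that the argument above still goes through there, which it does because the indexing modulo $q+1$ is built into the construction and the argument only ever uses that distinct parallel lines do not meet and that $l_i\cap l_j=\{O\}$ for $i\neq j$. With both parts established, no vertex and no edge is shared by $\mathcal{P}_{P_0}$ and $\mathcal{P}_{Q_0}$.
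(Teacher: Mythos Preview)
Your proof is correct and follows essentially the same approach as the paper: the paper argues by minimal counterexample that $P_i=Q_i$ forces $P_{i-1}=Q_{i-1}$, which is exactly the contrapositive of your forward inductive step, and for the lines the paper simply remarks that a shared line would force a shared point, which you spell out explicitly via the parallel-class indexing. The only cosmetic difference is that you make the edge argument more detailed than the paper does.
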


\begin{proof}
Let
\[
\mathcal{P}_{P_0}: \ P_0 \rightarrow P_1\rightarrow  \cdots \rightarrow  P_q \hspace{1in} \mathcal{P}_{Q_0}: \  Q_0 \rightarrow Q_{1} \rightarrow \cdots  Q_q
\]
Clearly $P_i \neq Q_j$, for $i\neq j$. We also know that $P_0 \neq Q_0$. So, assume that $P_i=Q_i$, for some $i=1, \ldots , q$, so that $P_j\neq Q_j$, for all $0\leq j<i$. It follows that
\[
\left( l_{i+1 \bmod q+1}+P_{i-1} \right) \cap  l_i = P_i = Q_i =\left( l_{i+1 \bmod q+1}+Q_{i-1}  \right)\cap  l_i
\]
which forces $l_{i+1 \bmod q+1}+P_{i-1} = l_{i+1 \bmod q+1}+Q_{i-1} $, and thus $P_{i-1}=Q_{i-1}$, a contradiction.

Finally, it is easy to see that if $\mathcal{P}_{P_0}$ and $\mathcal{P}_{Q_0}$ shared a line, then they would also share a point.
\end{proof}

\begin{lemma}\label{lemscycles}
We can partition the points of $\alpha_q \setminus \{O\}$ into $s$ cycles, $\Cl_1, \ldots , \Cl_s$, where the length of $C_i$ is $t_i(q+1)$  for some integer $t_i$, $1\le  i  \le   s  \le q-1$, $1\le  t_1 \le   \ldots  \le t_s$, and $t_1 + \cdots + t_s = q-1$.
\end{lemma}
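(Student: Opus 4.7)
The plan is to iterate the path construction by introducing a successor permutation of $\alpha_q \setminus \{O\}$. Define
\[
\sigma(P) \;=\; \bigl(l_{i+2 \bmod q+1} + P\bigr) \cap l_{i+1 \bmod q+1}
\qquad \text{for } P \in l_i \setminus \{O\}.
\]
By construction $\sigma$ advances each path $\mathcal{P}_{P_0}$ vertex by vertex, $\sigma(P_{j-1}) = P_j$ for $1 \le j \le q$, while $\sigma(P_q)$ lands back on $l_0$; thus $\sigma$ chains the paths of Lemma \ref{lemdisjoint} together.

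First I would verify that $\sigma$ is a well-defined permutation. Each restriction $\sigma \colon l_i \setminus \{O\} \to l_{i+1 \bmod q+1} \setminus \{O\}$ is injective (two distinct parallels to $l_{i+2}$ cannot share a point), and its image avoids $O$ (otherwise $l_{i+2}+P$ would pass through $O$, forcing $P \in l_{i+2}\cap l_i = \{O\}$). Since both domain and codomain have $q-1$ elements, $\sigma$ is a bijection on $\alpha_q \setminus \{O\}$ cyclically rotating the partition $\{l_j \setminus \{O\} : 0 \le j \le q\}$.

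The cycles of the lemma are taken to be the orbits of $\sigma$. Because $\sigma$ shifts the index $i$ by $1 \bmod q+1$, every orbit length is a positive multiple of $q+1$; writing it as $t(q+1)$ identifies $t$ with the size of the corresponding orbit of $\sigma^{q+1}$ acting on $l_0 \setminus \{O\}$. Summing orbit sizes over this $(q-1)$-element set yields $t_1 + \cdots + t_s = q-1$ and hence $s \le q-1$, matching the statement.

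The main point, and the only substantive obstacle, is that each orbit, viewed as a cycle with edges $\{P,\sigma(P)\}$, is genuinely embedded: no line of $\alpha_q$ is used twice among its edges. Such an orbit is a concatenation of several paths $\mathcal{P}_{P_0^{(k)}}$ joined at their endpoints by edges $P_q^{(k)} \to \sigma(P_q^{(k)})$. These joining edges all lie in the parallel class of $l_1$, which is the unique direction \emph{not} used by any $\mathcal{P}_{P_0}$, and distinct points $P_q^{(k)}$ produce distinct parallels to $l_1$; hence the joining edges are pairwise distinct and disjoint from every path's edge set. Edges inside a single path are distinct by the construction preceding Lemma \ref{lemdisjoint}, while edges from two different paths in a common class $l_j$ ($j \ne 1$) remain distinct thanks to the line-disjointness in Lemma \ref{lemdisjoint}. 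The scheme works precisely because the spiraling construction deliberately omits one parallel class, leaving it available to close up the cycles.
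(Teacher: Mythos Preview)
Your proof is correct and follows essentially the same approach as the paper: the map $\sigma$ you introduce is precisely the ``next point'' operation the paper iterates, and your orbits are exactly the cycles the paper builds by chaining the paths $\mathcal{P}_{P_0}$ via the unused $l_1$-parallel class until they close. The permutation/orbit language is a tidy repackaging of the paper's iterative description, and your explicit check that no line is repeated is, if anything, slightly more careful than the paper's treatment.
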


\begin{proof}
If we label the points on $l_0 \setminus\{O\}$ by $x_1, x_2, \cdots, x_{q-1}$, by Lemma \ref{lemdisjoint}, $\mathcal{P}_{x_1}$, $\mathcal{P}_{x_2}, \ldots$, $\mathcal{P}_{x_{q-1}}$ yields a partition of the points of $\alpha_q \setminus \{O\}$ into $q-1$ disjoint paths each having $q+1$ vertices. If $q\ge 3$, then we have at least two such paths,  and  we may want to connect them to create longer paths and/or cycles.

Note that in the paths $\Po_{x_i}$, no line parallel to $l_1$ has been used. Now, if we consider a path $\mathcal{P}_{P_0}$,  then the line $l_{1}+ P_{q}$ intersects $l_0$ at a point $Q$, which can never be equal to $O$, otherwise $l_{1}+ P_{q}=l_1$, and thus $P_q\in l_1\cap l_q = \{O\}$, a contradiction. This point $Q$ is uniquely determined by $P_0$ (and the way we do this construction, of course). If $Q=P_0$, then we get a $(q+1)$-cycle. If $Q\neq P_0$, then we re-label $Q=Q_0$ and consider the path $\mathcal{P}_{Q_0}$. This will give us a path with $2(q+1)$ vertices, namely
\[
P_0 \rightarrow P_1\rightarrow  \cdots \rightarrow  P_q \rightarrow Q_0 \rightarrow Q_{1} \rightarrow \cdots  Q_q.
\]
We then proceed to find $R_0: = \left( l_{1}+ Q_{q} \right) \cap l_0$. If $R_0=P_0$ we get a cycle of length $2(q+1)$. If $R_0 =Q_0$, then we get that $Q_0$ is on two lines that are parallel to $l_1$, namely $l_{1}+ Q_{q}$ and $l_{1}+ P_{q}$. This forces $P_q$ and $Q_q$ to coincide, but this is impossible because of Lemma \ref{lemdisjoint}. It follows that we either get a cycle of length $2(q+1)$ or we can keep extending this path using $\mathcal{P}_{R_0}$. Since $l_0$ contains finitely many points this process must end. Moreover, it is impossible to `close' this cycle at any point that is not $P_0$, as this would yield the same contradiction we obtained above when we assumed $R_0 =Q_0$. Hence,  by combining paths we can construct cycles of length $t(q+1)$, for some positive integer $t$, these are  the cycles $\Cl_i$ we wanted to find.
\end{proof}

\medskip

In order to prove Theorem  \ref{thmlongcycles} we will need to construct paths out of the cycles $\Cl_1, \Cl_2,\ldots, \Cl_{s}$. Firstly, we define terms and set notation that will be necessary for the rest of this article.

\begin{definition}
For every $i=1,\ldots, s$, let $P_{i,i-1}$ be an arbitrary point on $l_{i-1}\cap \Cl_i$ (note that there are $t_i$ such points), and let $P_{i,i}$ be its neighbor on $l_i$.

We construct two different types of paths in $\Cl_{i}$: all of them start at $P_{i,i-1}$ and
\begin{enumerate}
\item the next vertex is $P_{i,i}$. The other vertices in the path are easily determined from these first two, or
\item the next vertex is the neighbor of $P_{i,i-1}$ in $\Cl_{i}$ that is on the line $l_{i-2 \mod q+1}$. The other vertices in the path are easily determined from these first two.
\end{enumerate}
We will say that the first path is a \emph{positive} path, and that the second is a \emph{negative} path.
\end{definition}

\begin{lemma}\label{lemcaset_1}
$k$-cycles can be embedded in $\alpha_q$, for all $3\leq k \leq t_1(q+1)$.
\end{lemma}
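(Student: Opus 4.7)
My plan is to obtain each required $k$-cycle as the union of two sub-paths of $\Cl_1$ emanating from $P_{1,0}$ in opposite directions, closed up by a single chord line of $\alpha_q$. That is, for each $k$ with $3 \leq k \leq t_1(q+1)$, I will pick non-negative integers $a,b$ with $a+b = k-1$, form the union of the positive path of length $a$ and the negative path of length $b$ from $P_{1,0}$ (a path on $k$ distinct vertices of $\Cl_1$), and close it into a $k$-cycle by adjoining the line through its two endpoints. The case $k = t_1(q+1)$ is handled directly by $\Cl_1$ itself: taking $a + b = t_1(q+1) - 1$, the ``missing'' chord is simply the one remaining edge of $\Cl_1$.

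The main work is to show that, for each $k < t_1(q+1)$, the closing chord is a line of $\alpha_q$ distinct from every edge of the combined sub-path. A useful observation in my favor is that no edge of $\Cl_1$ lies on any of the lines $l_0, l_1, \ldots, l_q$ through $O$: within a single spiral, consecutive vertices sit on different lines through $O$, so the line joining them does not pass through $O$; and the inter-spiral connecting edges are only parallel to $l_1$, not equal to $l_1$. Consequently, whenever the two endpoints of the combined path share a common line $l_i$ through $O$, that line $l_i$ is an admissible chord, since it cannot coincide with any path edge. This observation alone delivers $k$-cycles for $k$ of the form $j(q+1) + 1$ (with $1 \leq j \leq t_1 - 1$), obtained by taking a positive path of length $j(q+1)$ that returns to $l_0$ and closing with $l_0$ itself.

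For the remaining values of $k$, the chord is some line not through $O$, and I need finer parallel-class bookkeeping. The chord can coincide with a sub-path edge only when four specific vertices of $\Cl_1$ turn out to be collinear. My plan is to show that, for each $k$, some choice of $(a,b)$ avoids every such coincidence: if the natural choice (say $a = k-1$, $b = 0$) places the chord in a parallel class already occupied by a path edge and a witnessing collinearity holds, I shift one unit of length from the positive side to the negative, rerouting the chord through a different parallel class. I expect the hardest part of the argument to be proving that among these incremental shifts at least one always succeeds; this will require a case analysis on $k \bmod (q+1)$ together with an exploitation of the explicit parallel-class structure of the spiral to rule out the possibility that all candidate chords simultaneously collapse onto a path edge.
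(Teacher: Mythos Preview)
Your plan for $k \equiv 1 \pmod{q+1}$ coincides with the paper's: a positive $k$-path from $P_{1,0}$ returns to $l_0$ and you close with $l_0$ itself, which (as you correctly observe) is never an edge of $\Cl_1$.

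For the remaining residues, however, your proposal has a genuine gap. You intend to close the sub-path with a chord that in general does \emph{not} pass through $O$, and you acknowledge that the hardest step---showing that at least one choice of $(a,b)$ yields a chord distinct from every sub-path edge---is left undone. In an \emph{arbitrary} affine plane there is no coordinate structure to fall back on, so ``the explicit parallel-class structure of the spiral'' tells you only which parallel class each edge lies in, not which line in that class. A chord joining $P\in l_i$ to $Q\in l_j$ can in principle coincide with a path edge through $P$ or through $Q$; ruling this out for \emph{all} shifts $(a,b)$ simultaneously would require controlling collinearities among four spiral vertices in a plane that need not be Desarguesian. You have given no argument that some shift always succeeds, and it is not clear that one does.

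The paper sidesteps this difficulty entirely by exploiting the point $O$, which lies on none of the cycles $\Cl_i$, together with the pencil $l_0,\ldots,l_q$ through $O$, none of which is an edge of $\Cl_1$. For $k\not\equiv 1,2\pmod{q+1}$ one takes a positive $(k-1)$-path in $\Cl_1$ from $P_{1,0}\in l_0$ to a point on $l_{(k-2)\bmod(q+1)}$ and closes the cycle by joining both endpoints to $O$ along the corresponding $l_j$; distinctness of all lines is then automatic. The residue $k\equiv 2\pmod{q+1}$ (which forces $t_1>1$) is handled by a short detour through $O$ using $l_0,l_{q-1},l_q$ together with a disjoint two-vertex segment of $\Cl_1$. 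The key idea you are missing is that $O$ and its pencil form a ready-made supply of ``safe'' closing edges, so no collinearity analysis is needed at all.
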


\begin{proof}
If $q=2, 3$ the result is immediate. We assume $q\ge 4$ for the rest of this proof.

The cycle $\Cl_1$ is of length $t_1(q+1)$,  and so we only need to construct  $k$-cycles with $3  \le  k <   t_1(q+1)$. \\
If   $k\equiv  1 \pmod{q+1}$,  then,  since $k\ge 3$,  we consider a  positive $k$-path in $\Cl_1$  starting at $P_{1,0}$. As $k\equiv  1 \pmod{q+1}$,   this path ends at some $ Q_0 \in l_0$, and $Q_0 \ne P_0$. Connect $P_0$ to $Q_{0}$ using $l_0$ to get a $k$-cycle. \\
If $k \equiv 2 \pmod{q+1}$ and $2< k<t_1(q+1)$, then $t_1>1$. Consider a positive $(k-2)$-path $\Po$ in $\Cl_1$ starting at $P_{1,0}$. This path ends at a point $P_q\in l_q$. Since $k<t_1(q+1)$ then there is a $2$-path in $\Cl_1$, disjoint from $\Po$, of the form $Q_{q-1} \rightarrow Q_q$ with $Q_i\in l_i$, for $i=q-1,q$. Consider the following $k$ cycle
\[
O \xrightarrow{l_0}  \underbrace{P_0 \rightarrow \cdots \rightarrow P_{q-1}}_{in \ \Po}  \xrightarrow{l_{q-1}}  Q_{q-1}\rightarrow Q_q \xrightarrow{l_q}  O,
\]
where $P_{q-1}\in l_{q-1}$ was the neighbor of $P_q$ in $\Po$.  \\
If   $k \not\equiv  1,2  \pmod{q+1}$,  then,  since $3\le k\le  t_1(q+1)$,  take a positive $(k-1)$-path in $\Cl_1$ starting at $P_{1,0}$. This path will end on a point $P_{k-2}\in l_{k-2}$. Connect $P_0$ and $P_{k-2}$ to $O$ using $l_0$ and $l_{k-2}$, respectively, to get a $k$-cycle.
\end{proof}

\vspace{.1in}

We now focus on the construction of $k$-cycles for $k > t_1(q+1)$. In order to do that we will use the following construction.

\vspace{.1in}

\begin{construction}\label{constrP_m}
Let $\lambda_m =t_1+t_2+\cdots + t_m$. We will construct a  $\lambda _m (q+1)$-path $\mathcal{P}_m$ out of the cycles $\Cl_1, \Cl_2,\ldots, \Cl_{m}$, where $2\leq m \leq s$ (recall that $s\leq q-1$). \\
For each $i=1,\ldots, m-1$, we connect $\Cl_i$ with $\Cl_{i+1}$ by joining $P_{i,i}$ with $P_{i+1,i}$ using $l_i$. Then, for all $i=1,\ldots, m$, we take the $P_{i,i-1}P_{i,i}$ path in $\Cl_i$ having $t_i(q+1)$ vertices, and construct the following path
\[
\underbrace{P_{1,0} \rightarrow \cdots \rightarrow P_{1,1}}_{in \ \Cl_1}  \xrightarrow{l_1} \underbrace{P_{2,1} \rightarrow \cdots \rightarrow P_{2,2}}_{in \ \Cl_2}  \xrightarrow{l_2} \cdots  \xrightarrow{l_{m-1}} \underbrace{P_{m,m-1} \rightarrow \cdots \rightarrow P_{m,m}}_{in \ \Cl_m}
\]
Since no vertices were eliminated or added, and all new lines are distinct and through $O$ (none used in the
construction of the $\Cl_i$'s), this construction yields a  $P_{1,0}P_{m,m}$-path with  $\lambda _m (q+1)$
vertices.
\begin{figure}[htbp]
\begin{center}
\includegraphics[height=2.55in]{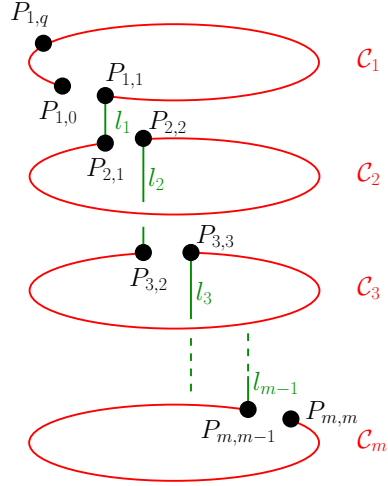}  \\
\caption{Construction of $\mathcal{P}_m$}
\end{center}
\end{figure}

\noindent Note that $O$ has not been used in the construction of $\mathcal{P}_m$, and that neither have the lines $l_m, \ldots, l_q$, and $l_0$. \\
Finally, we will denote the neighbor of $P_{1,0}$ in $\mathcal{P}_m$, which is a point on $l_q$, by $P_{1,q}$.
\end{construction}

\medskip

We now prove Theorem \ref{thmlongcycles}.

\begin{proof}[Proof of Theorem \ref{thmlongcycles}]
In this proof we follow the notation introduced in Construction \ref{constrP_m}.

If $q=2$,  the existence of 3-  and 4-cycles is obvious.  If $q=3$, pancyclicity can be easily verified. In what follows  we assume $q\ge 4$,  though most arguments hold for $q\ge 3$.

We want to embed all possible $k$-cycles in $\alpha_q$ that have not been already discussed in Lemma \ref{lemcaset_1}. For any given $k$, we  write it as either $k=\lambda_s(q+1)$, $k=\lambda_s(q+1)+1 = q^2$, or $k = \lambda_m(q+1)+r$, for some $m=1, \ldots, s-1$ and $0\leq r<t_{m+1}(q+1)$. Note that the case $m=0$ was taken care of in Lemma \ref{lemcaset_1}.

Firstly, we can join $P_{1,0}$ and $P_{s,s}$ with $O$, using the lines $l_0$ and $l_s$ respectively, to obtain a cycle of length $\lambda_s(q+1)+1$. Note that this grants hamiltonicity. Moreover, if we cut $\mathcal{P}_s$ short one vertex, and thus we ask it to end at $P_{1,q}$, then joining the endpoints of this new path to $O$ yields a $\lambda_s(q+1)$-cycle.\\

\noindent From now on, let $k = \lambda_m(q+1)+r$, for some $m=1, \ldots, s-1$ and some $0\leq r<t_{m+1}(q+1)$.  \\
Our strategy for constructing a $k$-cycle in $\alpha _q$ will be to connect a path on $\Cl_{m+1}$ (note that $m<s$) to $O$ and $\Po_m$. The paths on $\Cl_{m+1}$ we will consider always starts at $P_{m+1,m}$, which will be connected to $P_{m,m} \in \Po_m$ by using $l_m$.
\begin{figure}[htbp]
\begin{center}
\includegraphics[height=1.7in]{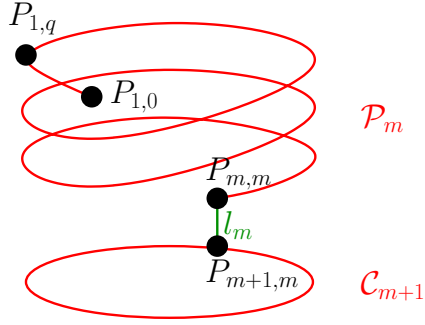}  \\
\caption{Connecting $\Po_m$ and $\mathcal{C}_{m+1}$}
\end{center}
\end{figure}

We consider several cases. \\
\noindent \textbf{(a)} If $r\equiv 3 \pmod{q+1}$ we first get a positive path on $r-1$ vertices on $\Cl_{m+1}$ that ends on a point $Q_{m+1} \in l_{m+1}$.  We then  join $P_{1,0}$ with $O$ using $l_0$, $Q_{m+1}$ with $O$ using $l_{m+1}$. The result is a cycle of the desired length.
\textbf{(b)} If $r\equiv 1 \pmod{q+1}$ we consider a negative $(r-2)$-path on $\Cl_{m+1}$ that ends on a point $Q_{m+1} \in l_{m+1}$. We finish the construction as in case \textbf{(a)}.
\textbf{(c)} If $r\equiv 2 \pmod{q+1}$ or $r\equiv 0 \pmod{q+1}$, then we cut $\mathcal{P}_m$ short one vertex, so it ends at $P_{1,q}$. We get the path in $\Cl_{m+1}$ as in part \textbf{(a)} (for $r\equiv 2 \pmod{q+1}$) or \textbf{(b)} (for $r\equiv 0 \pmod{q+1}$). We close the cycle by joining $P_{1,q}$ with $O$ using $l_q$, $Q_{m+1}$ with $O$ using $l_{m+1}$.\\
\textbf{(d)}   If $r\equiv i \pmod{q+1}$, where $4\leq i \leq q$. We want to get a positive $(r-2)$-path on $\Cl_{m+1}$ starting at $P_{m+1,m}$. This path would end at a point on $l_{m+i-2 \mod q+1}$.   \\
\emph{(i)}  If $i\leq q+2-m$, then $m+i-2 \leq q$, and thus this path on $r-1$ vertices ends at a point $Q_{m+i-2}\in l_{m+i-2}$.   We then get a cycle of the desired length by  joining $P_{1,0}$ with $O$ using $l_0$, $Q_{m+i-2}$ with $O$ using $l_{m+i-2}$.\\
\emph{(ii)}  If $i\geq q+3-m$,  then $m+i-2 \geq q+1$, and thus this path on $r-1$ vertices ends at a point $Q_{t-2}\in l_{t-2}$, where $0\leq t-2 \leq m-3$. We next `shift' this path to make it start at $P_{m+1,m+1}$ instead of $P_{m+1,m}$ and add a vertex to make it a path on $r$ vertices. Now this path ends at $Q_{t}\in l_{t}$, where $2\leq t \leq m-1$.  Since the line $l_t$ is needed to construct $\Po_m$ we will need to modify the construction of $\Po_m$ by connecting the cycles $\Cl_1, \Cl_2,\ldots, \Cl_{m+1}$ in the following way
\[
\Cl_1\xrightarrow{l_1} \Cl_2  \xrightarrow{l_2} \cdots \xrightarrow{l_{t-2}}   \Cl_{t-1}\xrightarrow{l_{t-1}} \Cl_t   \xrightarrow{l_{t+1}} \Cl_{t+1} \xrightarrow{l_{t+2}} \cdots      \xrightarrow{l_{m+1}} \Cl_{m+1}
\]
Note that this can be done for all $2 \leq t \leq m-1$, and that doing this means that $P_{t,t}$ is a `loose' vertex, not used in $\Po_m$ anymore.\\
Now we connect this path to the path on $\Cl_{m+1}$ that ends on $Q_t$. The line $l_t$ is now free, and thus it can be used to close the cycle at $O$. We get the cycle
\[
O \xrightarrow{l_0} \Cl_1 \xrightarrow{l_1} \cdots  \xrightarrow{l_{m}}  \Cl_m \xrightarrow{l_{m+1}} P_{m+1,m+1} \rightarrow \cdots \rightarrow Q_t \xrightarrow{l_t} O
\]
This cycle has length:
\[
\lambda_m(q+1) -1 + r +1 =  \lambda_m(q+1) + r  = k
\]
The `minus one' is because of the loose vertex, the `plus one' is because of $O$.
\end{proof}

\section{Cycles in Projective Planes}

In this section we will study embeddings of cycles in finite projective planes. Let $\pi_q$ denote a projective plane of order $q$. We think about $\pi_q$ as obtained from an affine plane $\alpha _q$ by adding a line, denoted $\ell_{\infty}$, consisting of points $(i)$, for $i=0, \cdots, q$.  Using the notations from the previous section,  each of these points $(i)$  is incident with only the following lines: $\ell_{\infty}$, line $l_i$ of $\alpha _q$,  and the  $q-1$ lines of $\alpha _q$ parallel to $l_i$.
The next statement follows immediately from our work in Section \ref{sec:CyclesAffinePlanes}.

\begin{corollary}\label{lemprojcycles}
Let $\pi_q$ be a projective plane of order $q$. Then, a $k$-cycle can be embedded in $\pi_q$, for all $k=3, \ldots , q^2$.
\end{corollary}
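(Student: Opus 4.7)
The plan is essentially to note that this is a direct consequence of Theorem \ref{thmlongcycles}. Since $\pi_q$ is obtained from $\alpha_q$ by adjoining the line $\ell_\infty$ together with its $q+1$ points at infinity, every point of $\alpha_q$ is a point of $\pi_q$, every affine line $l$ of $\alpha_q$ extends to a line $l \cup \{(i)\}$ of $\pi_q$ for a unique point at infinity $(i)$, and incidence between affine points and affine lines is preserved in $\pi_q$.

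Concretely, first I would fix any embedding $f\colon V(C_k)\cup E(C_k)\to \Po(\alpha_q)\cup \Li(\alpha_q)$ guaranteed by Theorem \ref{thmlongcycles} for the range $3\le k\le q^2$. Then I would define $\tilde f\colon V(C_k)\cup E(C_k)\to \Po(\pi_q)\cup \Li(\pi_q)$ by sending each vertex $x$ to the same point $f(x)$ (now viewed in $\pi_q$) and each edge $e$ to the unique line of $\pi_q$ extending the affine line $f(e)$. Injectivity on vertices and on edges is inherited from $f$, and no vertex is sent to a line (or vice versa) because $\Po(\alpha_q)\subset \Po(\pi_q)$ and the images of edges are (extensions of) affine lines. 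Finally, if $x$ is incident with $e$ in $C_k$, then $f(x)\in f(e)$ as an affine incidence, and hence $\tilde f(x)\in \tilde f(e)$ as a projective incidence.

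There is no genuine obstacle here; the only thing to be slightly careful about is that the embedding conditions required by the definition of $G\hookrightarrow \pi$ stated in the introduction are checked in the projective setting, namely that $\tilde f$ is injective, that points go to points, edges go to lines, and incidences are respected. All of these follow immediately from the corresponding properties of $f$ and from the fact that the passage from $\alpha_q$ to $\pi_q$ only enlarges the point set, line set, and incidence relation. Thus the corollary is obtained with essentially no extra work beyond Theorem \ref{thmlongcycles}.
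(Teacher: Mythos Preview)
Your proposal is correct and is exactly the approach the paper takes: the corollary is stated as following ``immediately from our work in Section~\ref{sec:CyclesAffinePlanes},'' i.e., from Theorem~\ref{thmlongcycles}, with no further proof given. Your explicit verification that an affine embedding lifts to a projective one is more detailed than what the paper writes, but the underlying idea is identical.
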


Therefore in order to prove the pancyclicity of $\pi _q$, we need to show that $k$-cycles can be embedded into $\pi_q$ for all $k$, $q^2+1 \leq k \leq q^2+q+1$.  At this point one would expect to use heavily the pancyclicity of $\alpha _q$ for the construction of `long' cycles in $\pi_q$,  but we could not make use of this idea.  Instead,  we base our construction methods on using the cycles $\Cl_i$ in similar ways to that in the proof of Theorem \ref{thmlongcycles}.

Let $W_1$ be any of the vertices of $\Cl_1$ that are on $l_q$,  and let $V_1= (l_1+W_1) \cap l_0$. It follows that $V_1$ is a vertex of $\Cl_1$, and that $l_1+W_1$ is an edge of $\Cl_1$. Similarly, for $2\leq i \leq s$, let $W_i \in l_{i-2}$ be a vertex of $\Cl_i$, and  $V_i= (l_i + W_i) \cap l_{i-1}$. Hence, $V_i$ is a vertex of $\Cl_i$, and  $l_i + W_i$ is an edge of $\Cl_i$. For each $i=1, \cdots , s$, let $[V_i,W_i]$ denote the $V_iW_i$-path in $\Cl_i$, different from the edge $V_iW_i$.  Next we define $U_i$ to be the vertex of $\Cl_i$ that is on $l_q$ and that is the closest to $V_i$, when we move from $V_i$ towards $W_i$ along $\Cl_i$. By $[V_iU_i]$ we denote the subpath of $[V_i,W_i]$ having $q-i+2$ vertices and endpoints $V_i$ and $U_i$.

\begin{figure}[htbp]
\begin{center}
\includegraphics[height=1.65in]{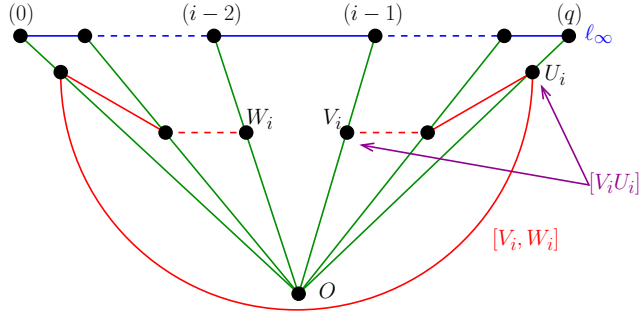} \\
\caption{Paths $[V_i,W_i]$ and $[V_iU_i]$}
\end{center}
\end{figure}

\begin{figure}[htbp]
\begin{center}
\includegraphics[height=1.65in]{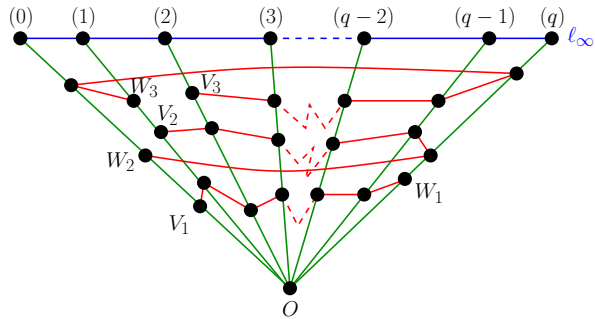} 
\caption{Paths  $[V_1,W_1]$,  $[V_2,W_2]$, and  $[V_3,W_3]$}
\end{center}
\end{figure}

Recall that $(i)= l_i \cap \ell_{\infty}$. We now construct a path $\Po$ (for $s\geq 2$) by connecting $W_i$ with $(i)$ using $l_i+W_i$ (which is not an edge of $[V_i,W_i]$), and connecting $(i)$ with $V_{i+1}$ using $l_i$. Thus $\Po$ is the path:
\[
[V_1,W_1] \xrightarrow{l_1+W_1} (1) \xrightarrow{l_1} [V_2,W_2]  \xrightarrow{l_2+W_2} (2) \rightarrow \cdots \rightarrow (s-1) \xrightarrow{l_{s-1}}  [V_s,W_s].
\]
For $s=1$, $\Po$ is obtained from the cycle $\Cl _1$ by removing the edge $V_1W_1$.

Note that for all $s$, $\Po$  has $(q^2-1)+(s-1)=q^2+s-2$ vertices. The lines $l_{s}, \cdots , l_q, l_0$, $l_s+W_s$, and $\ell_{\infty}$ have not been used in the construction of $\Po$, and neither have the points $(s), \cdots , (q), (0)$, and $O$.

\begin{figure}[htbp]
\begin{center}
\includegraphics[height=1.65in]{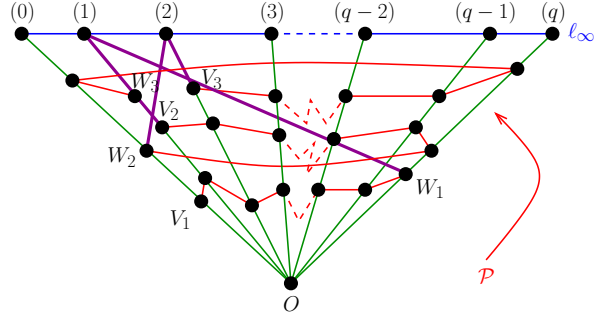} 
\caption{Paths $[V_1,W_1]$,  $[V_2,W_2], [V_3,W_3]$ joined into a path $\Po$. Its endpoints are $V_1$ and $W_3$}
\end{center}
\end{figure}

\begin{figure}[htbp]
\begin{center}
\includegraphics[height=1.65in]{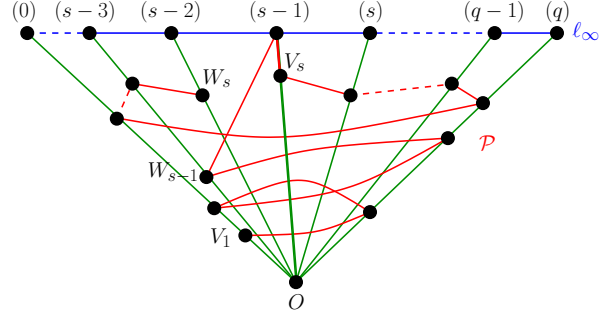} 
\caption{A simple diagram of the path $\Po$}
\end{center}
\end{figure}

Now we begin our construction of $k$-cycles in $\pi_q$ of lengths from $q^2+1$ to $q^2+q+1$ by using the path $\Po$ and/or modifications of it. Recall that $s$ denotes the number of all cycles $\Cl_i$,  or of all paths $[V_i,W_i]$, and that $1\le s\le q-1$.  We will first construct cycles of length between $q^2+1$ to $q^2+s+2$, and then the ones that are longer than $q^2+s+2$.

\begin{lemma}\label{lempathsq^2+1->q^2+s}
Cycles of length ranging from $q^2+1$ to $q^2+s+2$ can be embedded in~$\pi_q$.
\end{lemma}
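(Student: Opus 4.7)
The plan is to construct, for each target length $C$ with $q^2+1 \le C \le q^2+s+2$, an explicit $C$-cycle by closing a variant of the path $\Po$ using only the extra points $\{O, (0), (s), (s+1),\ldots,(q)\}$ and the available lines $\{l_0, l_s, \ldots, l_q, l_s+W_s, \ell_{\infty}\}$. I would organize the argument by the excess $g = C - |\Po| = C - (q^2+s-2) \in \{3-s,\ldots,4\}$, and treat the two regimes $g \ge 2$ and $g \le 1$ separately.

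First, for $g \in \{2,3,4\}$ (that is, $C \in \{q^2+s, q^2+s+1, q^2+s+2\}$), I would close $\Po$ by appending a short path from $W_s$ back to $V_1$ through exactly $g$ new vertices. Natural candidates are
\[
W_s \xrightarrow{l_s+W_s} (s) \xrightarrow{l_s} O \xrightarrow{l_0} V_1
\]
for $g=2$,
\[
W_s \xrightarrow{l_s+W_s} (s) \xrightarrow{\ell_{\infty}} (i) \xrightarrow{l_i} O \xrightarrow{l_0} V_1
\]
for $g=3$ (any $i \in \{s+1,\ldots,q\}$), and
\[
W_s \xrightarrow{l_s+W_s} (s) \xrightarrow{l_s} O \xrightarrow{l_i} (i) \xrightarrow{\ell_{\infty}} (0) \xrightarrow{l_0} V_1
\]
for $g=4$. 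A short check confirms that each line invoked belongs to the available list and is used only once, which, by Remark \ref{remcombpaths}, gives the three cycles $q^2+s$, $q^2+s+1$, $q^2+s+2$.

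Next, for $g \le 1$ the plan is more delicate because $\Po$ itself has at least $C-1$ vertices. I would combine two local operations. A \emph{detour} replaces a single edge of $\Po$ by a two-edge path through a fresh infinity point, thereby raising the length by exactly one; for instance, the connector $W_1 \to (1)$ in $\Po$ can be swapped for $W_1 \xrightarrow{l_q} (q) \xrightarrow{\ell_{\infty}} (1)$ whenever $l_q$ and $\ell_{\infty}$ are still free, producing a path of length $|\Po|+1$ that still has $V_1$ and $W_s$ as endpoints. A \emph{truncation} stops the path $[V_s,W_s]$ at an earlier vertex $X \in l_k$ with $k \in \{0,s,\ldots,q\}$ and then closes $X$ to $V_1$ through $O$ (and, if needed, some of $(0),(s+1),\ldots,(q)$); because $\Cl_s$ meets every $l_j$ in $t_s$ points, suitable $X$ always exists. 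Varying the truncation point and composing it with the detour of the previous sentence, I would hit every remaining $C \in \{q^2+1,\ldots,q^2+s-1\}$.

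The main obstacle throughout is line bookkeeping: $l_0$ and $\ell_{\infty}$ each appear in essentially every closure and can be used only once, while for $j\ne s$ every line parallel to $l_j$ and different from $l_j$ itself is already consumed inside some $\Cl_i$. This severely restricts where a detour may be planted, forcing almost every modification to exit either through $O$ or along $l_s+W_s$. The hardest sub-case is expected to be $s \ge 4$ with $g$ close to $3-s$, where a truncation, one or more detours, and an end-extension must all share the tiny pool of available lines without collision; verifying this coexistence will be the delicate core of the argument.
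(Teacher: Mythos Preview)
Your treatment of the three longest cycles ($g\in\{2,3,4\}$, i.e.\ lengths $q^2+s$, $q^2+s+1$, $q^2+s+2$) is correct and essentially identical to the paper's: close $\Po$ through a short chain of the available points $(s),(q),(0),O$ using $l_s+W_s$, $l_s$, $l_q$, $\ell_\infty$, $l_0$.

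For $g\le 1$ (lengths $q^2+1,\ldots,q^2+s-1$), however, your plan has a concrete gap. If you truncate $\Po$ by dropping the last $r$ vertices of $[V_s,W_s]$ and then close through $O$ (adding one vertex), the resulting length is $q^2+s-1-r$; to hit $q^2+j$ you must take $r=s-1-j$, and the new endpoint then sits on $l_{j-1}$ with $j-1\in\{0,1,\ldots,s-2\}$. But $l_1,\ldots,l_{s-2}$ are all already used inside $\Po$ (and $l_0$ is needed at the other end, at $V_1$), so the closing edge $X\to O$ is unavailable in every case. Your single detour $W_1\to(q)\to(1)$ shifts the endpoint by one slot but consumes both $l_q$ and $\ell_\infty$, which are then unavailable for closing; a second detour is impossible because, as you yourself note, for $j\ne s$ every line parallel to $l_j$ other than $l_j$ is already in some $\Cl_i$. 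So ``truncate the tail of $\Po$ and detour'' cannot reach this range as sketched.

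The paper avoids this by cutting from the \emph{interior} of the length-$(q^2+s)$ cycle $\mathcal{C}$ rather than from its end. For each $i$, the initial segment $[V_iU_i]$ of $[V_i,W_i]$ runs from $V_i\in l_{i-1}$ to $U_i\in l_q$; since neither $l_{i-1}$ nor $l_q$ is used in $\mathcal{C}$ at the point of surgery, one may delete all of $[V_iU_i]$ except $U_i$ and reroute $(i-1)\xrightarrow{l_{i-1}}O\xrightarrow{l_q}U_i$. This removes exactly $q-i+1$ vertices and adds $O$, giving length $q^2-q+s+i$; letting $i$ range over $\{q-s+1,\ldots,s\}$ covers $q^2+1$ up to $q^2+2s-q$. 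The remaining lengths are obtained by a second surgery inside $[V_sU_s]$: writing $V_s=P_{s-1},P_s,\ldots,P_q=U_s$, one replaces the segment $P_{i+1},\ldots,P_{q-1}$ by $P_i\xrightarrow{l_i}O\xrightarrow{l_q}P_q$ for $i\in\{s,\ldots,q-1\}$. The point you are missing is that \emph{every} block $[V_iU_i]$ terminates on the single unused line $l_q$, so the shortcut through $O$ is available at many interior locations, and varying $i$ gives exactly the fine length control your end-truncation cannot.
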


\begin{proof}
Using the path $\Po$ we can construct
\begin{enumerate}
\item a cycle of length $q^2+s+2$:
\[
\underbrace{V_1 \rightarrow \cdots \rightarrow W_s}_{in \ \Po}  \xrightarrow{l_s+W_s} (s) \xrightarrow{l_s} O \xrightarrow{l_q} (q) \xrightarrow{\ell_{\infty}}  (0) \xrightarrow{l_0} V_1,
\]
\item a cycle of length $q^2+s+1$:
\[
\underbrace{V_1 \rightarrow \cdots \rightarrow W_s}_{in \ \Po}  \xrightarrow{l_s+W_s}  (s) \xrightarrow{\ell_{\infty}} (q) \xrightarrow{l_q} O \xrightarrow{l_0} V_1,\, \text{and}
\]
\item a cycle of length $q^2+s$:
\[
\underbrace{V_1 \rightarrow \cdots \rightarrow W_s}_{in \ \Po} \xrightarrow{l_s+W_s}  (s) \xrightarrow{\ell_{\infty}} (0) \xrightarrow{l_0}  V_1.
\]
Note that the lines $l_{s}, \cdots , l_q$ have not been used in the construction of this last cycle, and neither have the points $(s+1), \cdots , (q)$, and $O$. We will denote this cycle by $\mathcal{C}$.

\bigskip

\end{enumerate}
If  $q+1\leq 2s$, then, for every $q-s+1\leq i \leq s$, let us modify $\mathcal{C}$ in the following way:
\begin{itemize}
\item Delete $q-i+1$ vertices of the path $[V_iU_i]$, all except $U_i$.
\item Connect $(i-1)$ with $O$ (recall that $(i-1)$ was  connected to $V_i$ in $\mathcal{C}$ via $l_{i-1}$).
\item Connect $O$ with $U_i$ using $l_q$.
\end{itemize}
This yields the cycle
\[
\underbrace{U_i \rightarrow \cdots \rightarrow (i-1)}_{in \ \mathcal{C}}  \xrightarrow{l_{i-1}}  O  \xrightarrow{l_q} U_i
\]
which has length $(q^2+s)-(q-i+1)+1 = q^2-q+s+i$.

Since $q-s+1\leq i \leq s$, the length of this cycle ranges between $q^2+1$ and $(q^2+s)-(q-s)$. Note that if $q+1> 2s$, then $(q^2+s)-(q-s) <  q^2  +1$. So, for all relevant values of $s$ we have been able to construct cycles with lengths ranging from $q^2+1$ to $(q^2+s)-(q-s)$.

\medbreak

Next we want to construct $k$-cycles for $(q^2+s)-(q-s)<k<q^2+s$. In order to do that we need to set more notation.

Let us relabel the vertices in the path $[V_sU_s]$ by $V_s =P_{s-1}, P_s, \cdots , P_{q-1}, P_q$, where $P_i\in l_i$, for all $i=s, \ldots , q$.  Note that $P_q = U_s$.  For $s-1\le i< q$, let $[P_iP_j]$ denote the subpath of $[V_sU_s]$ joining $P_i$ and $P_j$.

Note that a cycle of length $(q^2+s)-(q-s)$ vertices may be obtained using $i=s$ in the previous construction. We want to use a similar construction to get a cycle of length $(q^2+s)-(q-s)+1$. We modify $\mathcal{C}$ by replacing its subpath $[P_{s-1}P_{q-1}]$ by a path
\[
(s-1) \xrightarrow{l_{s-1}}  O \xrightarrow{l_{q-1}} P_{q-1}  \rightarrow P_{q}
\]
leading to the following cycle of length $(q^2+s)-(q-s) +1$.
\[
\underbrace{U_s \rightarrow \cdots \rightarrow (s-1)}_{in \ \mathcal{C}} \xrightarrow{l_{s-1}}  O \xrightarrow{l_{q-1}} P_{q-1}  \rightarrow P_{q}
\]
Note that we are using here that $s\leq q-1$.

Now, to create cycles of length larger than $(q^2+s)-(q-s)+1$ we use the following strategy.

For every $i=s, \cdots , q-1$,  we modify  $\mathcal{C}$, by connecting $P_i$ with $O$, and  $O$ with $P_q$ to get the cycle
\[
\underbrace{P_q \rightarrow \cdots \rightarrow P_i}_{in \ \mathcal{C}}  \xrightarrow{l_{i}}  O  \xrightarrow{l_q} P_q ,
\]
which has length $(q^2+s)-(q-i-1)+ 1=(q^2+s)-(q-i -2)$. Since $i=s, \cdots , q-1$, then the length of this cycle ranges from $(q^2+s)-(q-s)+2$ to $(q^2+s)+1$.
\end{proof}

\begin{corollary}
With the same notation used in Lemma \ref{lempathsq^2+1->q^2+s}, if $s=q-1$, then $\pi_q$ is pancyclic.
\end{corollary}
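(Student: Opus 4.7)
The plan is to observe that the two prior results already cover disjoint consecutive ranges of cycle lengths, and that the hypothesis $s=q-1$ is exactly what makes these ranges meet at the top end $n_q = q^2+q+1$. So the corollary reduces to a one-line arithmetic check, with no new construction required.

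First I would invoke Corollary \ref{lemprojcycles}, which (via the affine construction inherited from Theorem \ref{thmlongcycles} and the fact that $\alpha_q$ sits inside $\pi_q$) provides $k$-cycles in $\pi_q$ for every $k$ in the range $3 \le k \le q^2$. Next I would invoke Lemma \ref{lempathsq^2+1->q^2+s}, which embeds $k$-cycles for every $k$ with $q^2 + 1 \le k \le q^2 + s + 2$. Setting $s = q-1$ gives $q^2 + s + 2 = q^2 + q + 1 = n_q$, so the second range becomes $q^2+1 \le k \le n_q$. The union of the two ranges is $\{3, 4, \ldots, n_q\}$, which is precisely the definition of pangonality (equivalently, pancyclicity of $Levi(\pi_q)$ restricted to even cycles of length $2k$, $3 \le k \le n_q$, translated back into the geometric setting via the bijection between $k$-gons in $\pi_q$ and $2k$-cycles in the Levi graph).

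The only thing worth pausing on is checking that the hypotheses of Lemma \ref{lempathsq^2+1->q^2+s} are actually satisfied throughout the range when $s=q-1$. In particular, the lemma's constructions require $s \le q-1$ (used at the point where we need room for the path $[P_{s-1} P_{q-1}]$ inside $[V_s U_s]$), which is met with equality, and also implicitly require $q \ge 2$ so that $s \ge 1$ makes sense; I would flag that for $q=2,3$ pancyclicity of $\pi_q$ can be checked directly (as was already done in the proof of Theorem \ref{thmlongcycles} for $\alpha_q$), so one may assume $q \ge 4$ when invoking the lemma.

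The main obstacle, if there is one, is purely verifying that the condition ``$s = q-1$'' is attainable, i.e., that the partition from Lemma \ref{lemscycles} can in fact produce $q-1$ cycles $\Cl_i$. But this corollary does not assert existence of such a plane; it is a conditional statement, so the condition is assumed. Hence the corollary follows immediately from combining the two cited results.
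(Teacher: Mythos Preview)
Your argument is correct and matches the paper's intended reasoning: the corollary is stated without proof precisely because it follows immediately by combining Corollary~\ref{lemprojcycles} (giving $k$-cycles for $3\le k\le q^2$) with Lemma~\ref{lempathsq^2+1->q^2+s} (giving $k$-cycles for $q^2+1\le k\le q^2+s+2$), and substituting $s=q-1$ makes the upper bound equal to $n_q$. Your additional remarks about hypothesis-checking and attainability of $s=q-1$ are sound but unnecessary here, since the corollary is conditional and the lemma's assumption $s\le q-1$ is met.
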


Now we want to construct cycles longer than $q^2+s+2$ for when $1\leq s < q-1$.

\begin{lemma}\label{lempathsq^2+s+2<}
For every $1\leq s < q-1$,  cycles of length ranging from $q^2+s+3$ to $q^2+q+1$ can be embedded in $\pi_q$.
\end{lemma}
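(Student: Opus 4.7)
My plan is to extend the base cycle $\mathcal{C}^{*}$ of length $q^{2}+s+2$ constructed in Lemma~\ref{lempathsq^2+1->q^2+s} --- which consists of $\Po$ (traversed from $V_{1}$ to $W_{s}$) capped by
\[
W_{s}\xrightarrow{l_{s}+W_{s}}(s)\xrightarrow{l_{s}}O\xrightarrow{l_{q}}(q)\xrightarrow{\ell_{\infty}}(0)\xrightarrow{l_{0}}V_{1}
\]
--- by incorporating additional infinity points drawn from $\{(s+1),(s+2),\ldots,(q-1)\}$, one or two at a time. For a target length $q^{2}+s+2+r$ with $1\le r\le q-s-1$, I would include exactly $r$ additional infinity points in the cycle.

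For small $r$ (say $r=1,2,3$) I would modify only the cap, replacing a segment like $O\to(q)\to(0)$ by a slightly longer variant such as $O\to(q-1)\to(q)\to(0)$ or $O\to(j)\to(0)$ with $j\in\{s+1,\ldots,q-1\}$, using the free lines $l_{0},l_{s},l_{s+1},\ldots,l_{q}$ through $O$ together with the single $\ell_{\infty}$-edge. For larger $r$ (and ultimately for the Hamiltonian length $q^{2}+q+1$) I would perform \emph{detour surgery} directly on $\Po$: remove an edge $P_{j-2}P_{j-1}$ of some $[V_{i},W_{i}]$ (whose line $l_{j}+P_{j-2}$ passes through a currently unused infinity point $(j)$) and replace it by the length-three path
\[
P_{j-2}\to(j)\to O\to P_{j-1},
\]
using the freed line $l_{j}+P_{j-2}$ together with $l_{j}$ and $l_{j-1}$; both $l_{j}$ and $l_{j-1}$ are free whenever $j\in\{s+1,\ldots,q\}$. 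Each such detour gains the two new vertices $(j)$ and $O$ (or only $(j)$ when $O$ is already present, via an alternative $\ell_{\infty}$-based routing). To add more than one new infinity point I would iterate the surgery, replacing $O$-based steps by $\ell_{\infty}$-based ones or by routings through further affine lines liberated by additional, coordinated edge removals in $\Po$.

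The main obstacle is the very tight budget of free lines. Since $\Po$ already uses $q^{2}+s-3$ of the $q^{2}+q+1$ lines of $\pi_{q}$, only $q-s+4$ lines remain free, and almost all of them pass through the single point $O$. Because $O$ has cycle-degree $2$ and $\ell_{\infty}$ can serve as at most one cycle-edge, no more than three extra infinity points can be added to $\mathcal{C}^{*}$ without modifying $\Po$ itself. Reaching the longer lengths therefore requires several coordinated surgeries whose freed lines must pass through precisely the desired infinity points, with no line or vertex reused, and whose combined effect yields a single simple cycle of exactly the prescribed length. I expect that carrying this out will involve a case analysis organized by $r\bmod (q+1)$ that parallels the proof of Theorem~\ref{thmlongcycles}, especially its case~(d)(ii), where analogous shifting of the construction was used in the affine setting.
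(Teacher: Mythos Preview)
Your general strategy---insert the missing points at infinity by performing edge-surgeries on a long base path/cycle---is the same as the paper's.  However, the concrete detours you propose are blocked, and the mechanism that actually makes the iteration possible is absent from your plan.

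The basic difficulty is that in your starting cycle $\mathcal{C}^{*}$ both $O$ and $\ell_{\infty}$ are already present.  Your single surgery
\[
P_{j-2}\;\longrightarrow\;(j)\;\longrightarrow\;O\;\longrightarrow\;P_{j-1}
\]
therefore repeats the vertex $O$, and your fallback ``$\ell_{\infty}$-based routing'' repeats the edge $\ell_{\infty}$.  The same obstruction kills the ``cap-only'' modifications you suggest for small $r$: any two distinct points at infinity are joined only by $\ell_{\infty}$, so you cannot thread more than one extra $(j)$ through the cap without reusing that line.  You correctly diagnose the tight line budget, but the remedy you sketch (``coordinated edge removals \ldots case analysis by $r\bmod(q+1)$'') is not what is needed and is not carried out.

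What the paper does instead is avoid $O$ entirely in the detours and exploit a \emph{cascading} of freed lines inside the subpath $[V_{s}U_{s}]=P_{s-1}P_{s}\cdots P_{q}$.  First $\Po$ is re-routed to a path $\tilde{\Po}$ ending at $P_{s}$, which frees the affine line $l_{s+1}+V_{s}$.  Then, iteratively for $i\ge 1$, the edge $P_{s+i-1}P_{s+i}$ (which lies on the line $l_{s+i+1}+P_{s+i-1}$) is removed and replaced by
\[
P_{s+i-1}\;\xrightarrow{\,l_{s+i}+P_{s+i-2}\,}\;(s+i)\;\xrightarrow{\,l_{s+i}\,}\;P_{s+i},
\]
where $l_{s+i}+P_{s+i-2}$ is precisely the line freed at the previous step.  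Each step adds exactly one new vertex $(s+i)$ and simultaneously liberates the line needed for the next step, so no case analysis is required and neither $O$ nor a second use of $\ell_{\infty}$ is needed until the very last (Hamiltonian) length.  This chaining of freed parallel-class lines is the missing idea in your proposal.
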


\begin{proof}
Just as we did in the proof of Lemma \ref{lempathsq^2+1->q^2+s}, the idea is to modify the path $\Po$ to get the desired cycles. Hence, we will use the same notation introduced earlier in this section, including that used in the proof of Lemma \ref{lempathsq^2+1->q^2+s}.

We first eliminate the edge $l_{s+1}+V_s$ from $\Po$, and connect $W_s$ with $V_s$ using $l_s+W_s$. This gives us a path $\tilde{\Po}$ that has the same length of $\Po$ ($q^2+s-2$ vertices) with endpoints $V_1$ and $P_s$.

\begin{figure}[htbp]
\begin{center}
\includegraphics[height=1.65in]{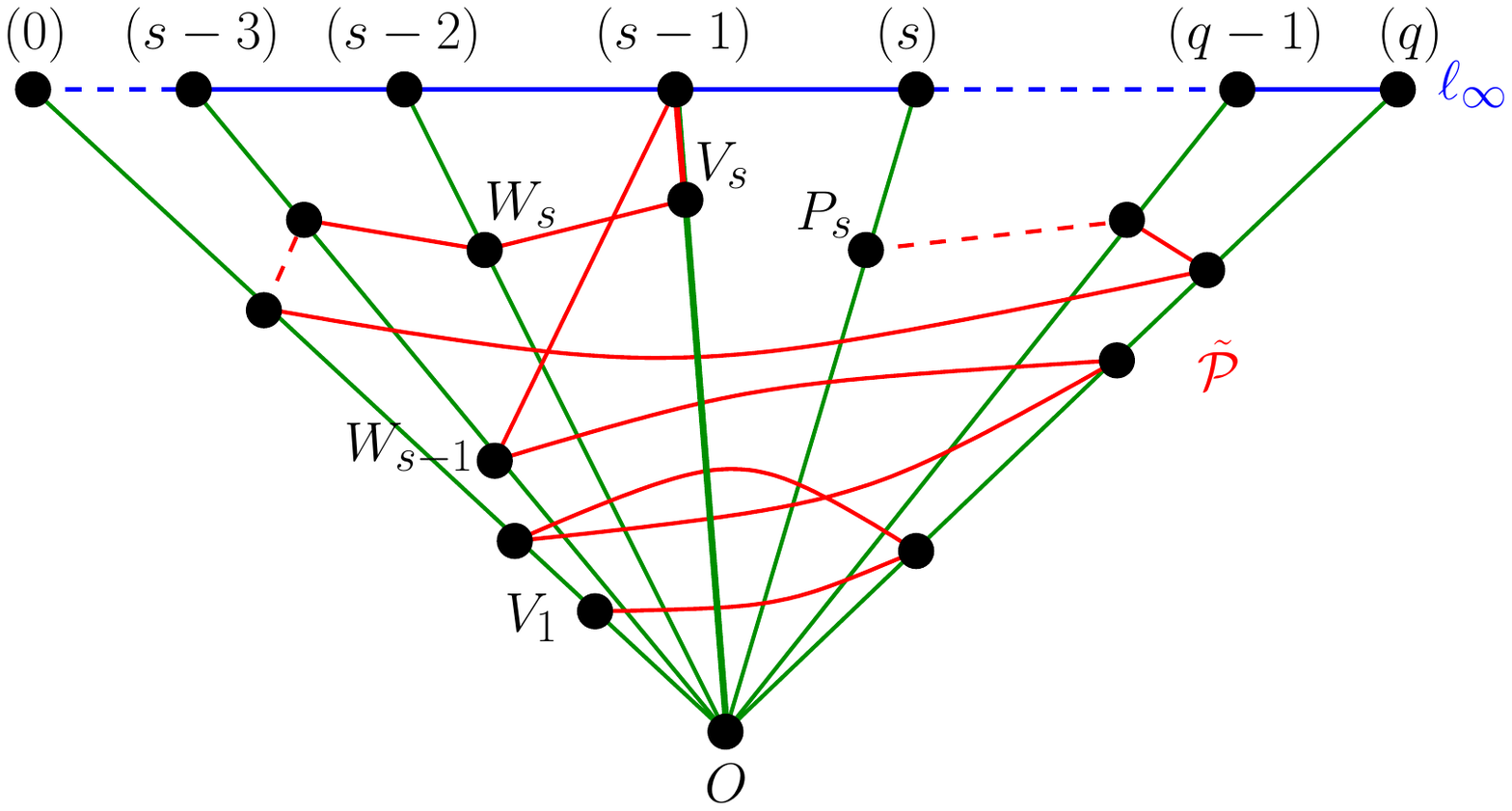} \\
\caption{The path $\tilde{\Po}$}
\end{center}
\end{figure}

\hspace{.2in}

Note that the lines $l_{s}, \cdots , l_q, l_0$, $l_{s+1}+V_s$, and $\ell_{\infty}$ have not been used, neither have the points $(s), \cdots , (q), (0)$, and $O$.

If we now eliminate the edge $l_{s+2}+P_s$ and, instead, connect $P_s$ with $P_{s+1}$ using
\[
P_s \xrightarrow{l_{s+1}+V_s}  (s+1) \xrightarrow{l_{s+1}}   P_{s+1},
\]
we get a path $\G_1$ in $(q^2+s-2)+1$ vertices (one more than $\tilde{P}$). We may close this path into a cycle by
\[
V_1  \xrightarrow{l_{0}} (0)  \xrightarrow{\ell_{\infty}}  (s)  \xrightarrow{l_{s}}  P_s \xrightarrow{l_{s+1}+V_s}  (s+1) \xrightarrow{l_{s+1}}   \underbrace{P_{s+1} \rightarrow \cdots \rightarrow V_1}_{in \ \tilde{\Po}}
\]
which has length $(q^2+s-1)+2=q^2+s+1$.

Now we eliminate  the edge $l_{s+3 \mod q+1}+P_{s+1}$ from $\G_1$ and instead connect $P_{s+1}$ with $P_{s+2}$ using
\[
P_{s+1} \xrightarrow{l_{s+2}+P_s}  (s+2) \xrightarrow{l_{s+2}}   P_{s+2}.
\]
This yields a path $\G_2$ in  $(q^2+s-2)+2$ vertices (two more than $\tilde{\Po}$). We may close this path into a cycle by using $\tilde{\Po}$ as above
{\small
\[
V_1  \xrightarrow{l_{0}} (0)  \xrightarrow{\ell_{\infty}}  (s)  \xrightarrow{l_{s}}  P_s \xrightarrow{l_{s+1}+V_s} (s+1) \xrightarrow{l_{s+1}} P_{s+1} \xrightarrow{l_{s+2}+P_s}  (s+2) \xrightarrow{l_{s+2}}    \underbrace{P_{s+2} \rightarrow \cdots \rightarrow V_1}_{in \ \tilde{\Po}}
\]
}
which has length $(q^2+s)+2=q^2+s+2$.

In general, for $1\leq i< q-s$ (and thus $s+i +1< q+1$), given a path $\G_i$ of length $(q^2+s-2)+i$ constructed as above we can eliminate the edge $l_{s+i+2 \mod q+1}+P_{s+i}$ from $\G_i$ and instead connect $P_{s+i}$ with $P_{s+i+1}$ using
\[
P_{s+i} \xrightarrow{l_{s+i+1}+P_{s+i-1}}  (s+i+1) \xrightarrow{l_{s+i+1}}   P_{s+i+1}
\]
this yields a path $\G_{i+1}$ in  $(q^2+s-2)+i+1$ vertices ($i+1$ more than $\tilde{\Po}$). We may close this path into a cycle as we did above
\[
V_1  \xrightarrow{l_{0}} (0)  \xrightarrow{\ell_{\infty}}  (s)  \xrightarrow{l_{s}}  P_s \xrightarrow{l_{s+1}+V_s}  (s+1) \xrightarrow{l_{s+1}} P_{s+1}\xrightarrow{l_{s+2}+P_s}  (s+2)  \rightarrow \cdots \hspace{1in}
\]
\[
\hspace{1.3in} \cdots \rightarrow  P_{s+i} \xrightarrow{l_{s+i+1}+P_{s+i-1}}  (s+i+1) \xrightarrow{l_{s+i+1}} \underbrace{P_{s+i+1}  \rightarrow \cdots \rightarrow V_1}_{in \ \tilde{\Po}}
\]
which has length $q^2+s+i+1$.

This will yield cycles of length up to $q^2+q$. The line not used in the $(q^2+q)$-cycle $\Q$ is
$l_0+P_{q-1}$, and the point not used is $O$. Figure \ref{fig:Q} gives an idea of what $\Q$ looks like.

\begin{figure}[htbp]
\begin{center}
\includegraphics[height=1.8in]{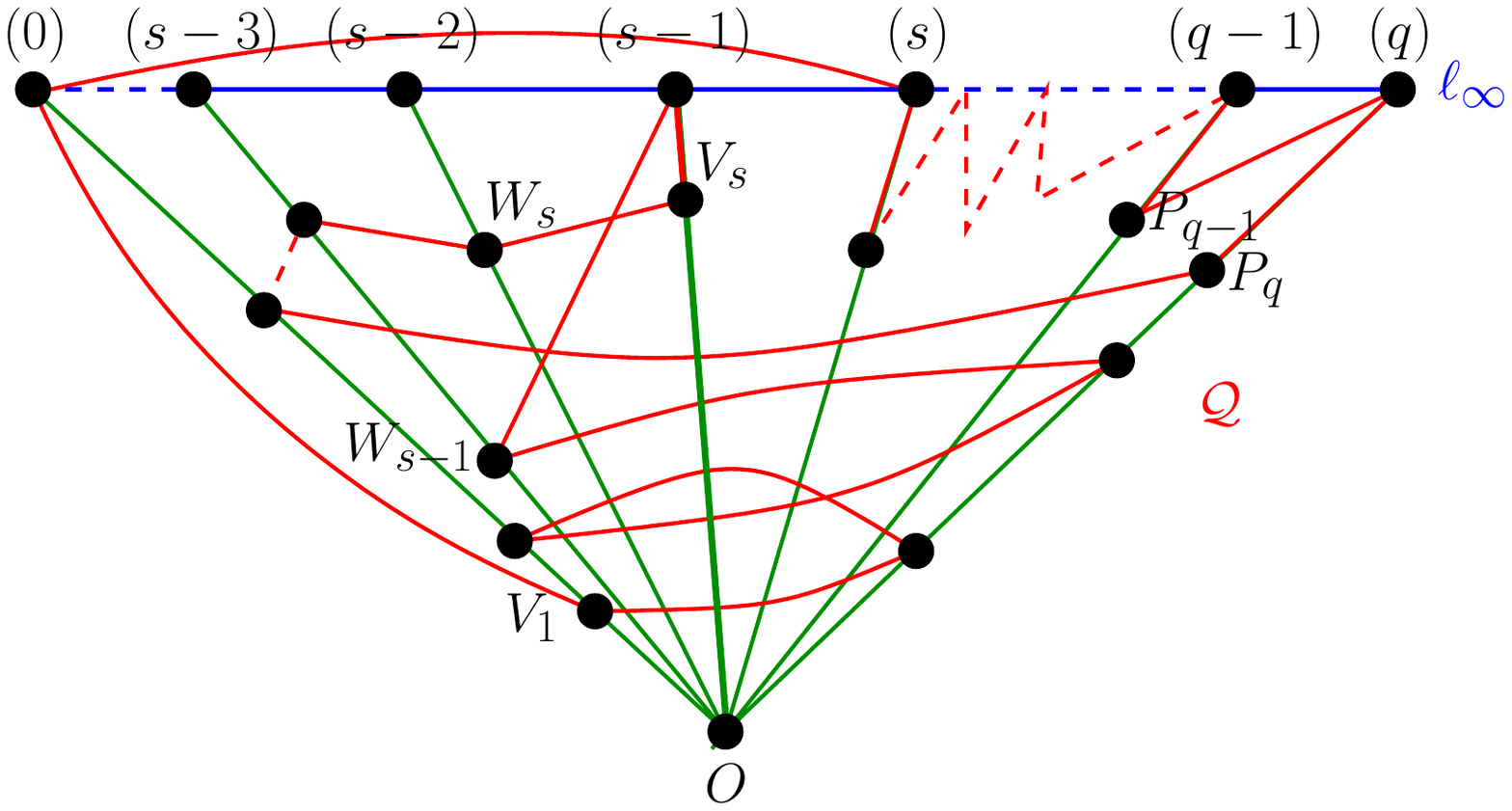} \\
\caption{Cycle $\Q$}\label{fig:Q}
\end{center}
\end{figure}

\hspace{.1in}

In order to construct a $(q^2+q+1)$-cycle we use $\Q$ and modified it as follows.
\begin{itemize}
\item eliminate $\ell_{\infty}$, which connected $l_0$ and $l_s$.
\item eliminate $l_{q-1}$, which connected $(q-1)$ and $P_{q-1}$.
\item eliminate $l_{q}$, which connected $(q)$ and $P_{q}$.
\item connect $(s)$ and $(q)$ using $\ell_{\infty}$.
\item connect $(q-1)$ and $O$ using $l_{q-1}$
\item connect $P_q$ and $O$ using $l_{q}$
\item connect $(q-1)$ and $(0)$ using $l_0+P_{q-1}$
\end{itemize}

We get the following hamiltonian cycle:

\begin{figure}[htbp]
\begin{center}
\includegraphics[height=2in]{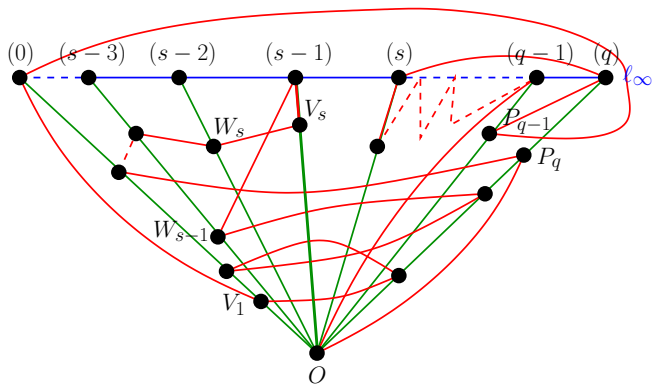} \\
\caption{A hamiltonian cycle}
\end{center}
\end{figure}
\end{proof}

\begin{proof}[Proof of Theorem  \ref{main}]
It follows from Lemmas \ref{lempathsq^2+1->q^2+s} and \ref{lempathsq^2+s+2<}.
\end{proof}

We wish to conclude this paper with a conjecture.   Let $s\ge 1$ and $n\ge 2$.   A finite partial plane
$\mathcal{G} = ({\Po}, {\Li} ; \I)$ is called a {\it generalized $n$-gon of order $s$} if its Levi graph is
$(s+1)$-regular,  has diameter  $n$, and has girth  $2n$.  It is known that a generalized $n$-gons of order
$s$ exists only for $n=2,3,4,6$, see Feit and Higman \cite{FH}.  It is easy to argue that the number of points
and the number of lines in the generalized $n$-gon is $p_s^{(n)} : = s^{n-1} + s^{n-2} + \cdots + s+1$ .  Note
that a projective plane of order $q$ is a generalized $3$-gon (generalized triangle) of order $q$, and so
$p_q^{(3)}  =  q^2 + q +1 = n_q$ -- the notation used in this paper earlier.

\begin{conjecture}
Let $s\ge 2$ and $n\ge 3$. Then $C_k \hookrightarrow \mathcal{G}$ for all $k$, $n\le k\le p_s^{(n)}$.
\end{conjecture}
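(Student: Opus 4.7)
The conjecture reduces substantially thanks to the Feit–Higman theorem cited just before it: for $s\ge 2$ and $n\ge 3$ the only possibilities are $n\in\{3,4,6\}$, and the case $n=3$ is already Theorem \ref{main}. So a proof of the conjecture amounts to handling generalized quadrangles ($n=4$) and generalized hexagons ($n=6$). My plan is to mimic the two-stage strategy used for $\pi_q$: first construct a family of long, pairwise disjoint paths that together cover almost all points of $\mathcal{G}$, then perform cut-and-paste operations using a distinguished line (playing the role of $\ell_\infty$) to adjust lengths one vertex at a time.

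Concretely, fix a point $O$ in $\mathcal{G}$ and label the $s+1$ lines through $O$ as $\ell_0,\ldots,\ell_s$. Starting from a point $P_0\in\ell_0\setminus\{O\}$, I would try to define a spiral path $\mathcal{P}_{P_0}$ that visits successive points $P_i\in\ell_i$ (indices mod $s+1$) by a combinatorially prescribed rule, in analogy with Section \ref{sec:CyclesAffinePlanes}. The key steps in order would be: (i) prove an analog of Lemma \ref{lemdisjoint}, that distinct starting points yield vertex- and line-disjoint spirals; (ii) close these spirals into cycles $\Cl_i$ whose lengths are multiples of $s+1$ and which partition most of the points, playing the role of Lemma \ref{lemscycles}; (iii) combine the $\Cl_i$ into a single long path $\Po_m$ as in Construction \ref{constrP_m}; (iv) splice in fragments of one further cycle together with $O$ and the points of $\ell_\infty$ to realize every length from $n$ up to $p_s^{(n)}$, mirroring Lemmas \ref{lempathsq^2+1->q^2+s} and \ref{lempathsq^2+s+2<}.

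The main obstacle is steps (i)–(ii): the proof of Lemma \ref{lemdisjoint} relies crucially on the existence of parallel classes in $\alpha_q$, i.e., the fact that through any point outside a given line there is a \emph{unique} non-intersecting line, and that the parallel classes partition the lines into blocks of exactly $q$. This is a purely projective-plane phenomenon; in a generalized $n$-gon with $n\ge 4$, two non-concurrent lines can lie at Levi-distance $2,4,\ldots,2(n-1)$ from each other, and no canonical notion of ``the line through $P$ parallel to $\ell$'' is available. A replacement gadget is needed — perhaps a regular spread for $n=4$, or the distance partition of lines around a fixed line for $n=6$ — which is not available in every generalized polygon. I therefore expect the argument to first be carried out model-by-model for the classical polygons (where algebraic coordinates supply an analog of parallel classes) and only then, with luck, to admit an axiomatic formulation. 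A secondary obstacle is the short-length endgame: the proof of Lemma \ref{lemcaset_1} uses very short subcycles to adjust parity, but in a generalized $n$-gon nothing shorter than an $n$-gon exists, so the construction of cycles of length exactly $n, n+1, \ldots, 2n-1$ must be re-engineered, most likely by a direct case analysis within the fixed neighborhood of $O$ and $\ell_\infty$.
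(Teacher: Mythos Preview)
The statement you are addressing is a \emph{conjecture}, not a theorem: the paper offers no proof of it and explicitly leaves it open. There is therefore nothing on the paper's side to compare your attempt against.

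Your proposal is not a proof either, and you are candid about this. You correctly observe that Feit--Higman reduces the problem to $n\in\{3,4,6\}$ and that $n=3$ is Theorem~\ref{main}, so only generalized quadrangles and hexagons remain. But from there your text is a strategy outline with two acknowledged gaps, not an argument. The first gap you identify is decisive: the spiral construction of Section~\ref{sec:CyclesAffinePlanes} depends on the map $P\mapsto (l_{i+1}+P)\cap l_i$ being a well-defined bijection, which in turn rests on the existence of parallel classes in $\alpha_q$. In a generalized $n$-gon with $n\ge 4$ there is no analogue of ``the line through $P$ parallel to $l$'', so step~(i) of your plan has no content as stated; without it, steps~(ii)--(iv) never get off the ground. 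Your suggestion to look for regular spreads or distance partitions as substitutes is reasonable as a research direction, but it is not available in an arbitrary $\mathcal{G}$, and you say as much.

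In short: the paper proves nothing here, and neither do you. What you have written is a thoughtful discussion of why the affine-plane method does not transfer directly and what one might try instead, which is appropriate commentary on an open conjecture but should not be labeled a proof.
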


\hspace{.2in}

\noindent \textbf{Acknowledgement:} The authors are thankful to Benny Sudakov for the clarification of  related results obtained by probabilistic methods.

\bigskip

\bigskip



\begin{thebibliography}{99}

\bibitem{Bol98} B. Bollob\' as, Modern Graph Theory, Springer-Verlag New York, Inc. (1998).

\bibitem{Bu95} F. Buekenhout (editor), Handbook of Incidence Geometry: Buildings and Foundations, Elsevier Science, North-Holland (1995).

\bibitem{CGW89} F. Chung, R.L. Graham and R.M. Wilson, Quasi-random graphs, \emph{ Combinatorica} 9 (1989), 345--362.

\bibitem{DL08} B. DeMarco and F. Lazebnik, Hall planes of order $p^2$ are Hamiltonian.  Unpublished manuscript, 2008.

\bibitem{Erd79}  P. Erd\H os,  Some old and new problems in various branches of combinatorics, \emph{Proceedings of the Tenth Southeastern Conference in Combinatorics, Graph Theory,  and Computing, Florida Atlantic  Univ., Boca Raton, Fla.} (1979) 19--37.

\bibitem{FH} W. Feit and G. Higman, The nonexistence of certain  generalized polygons, \emph{ J. Algebra 1}, 1964, 114--131.

\bibitem{Fri00} A. M. Frieze, On the number of perfect matchings and Hamilton cycles in regular non-bipartite graphs. \emph{Electronic J Combinatorics} Vol. 7 (2000), publ. R57.

\bibitem{FriKriv02} A. M. Frieze and M. Krivelevich,  Hamilton cycles in random subgraphs of pseudo-random graphs. \emph{Discrete Mathematics},  256, (2002) 137--150.


\bibitem{Gould03} R.J. Gould, Advances on the hamiltonian problem: A survey. \emph{ Graphs and Combinatorics}, 19 (2003), No. 1, 7-–52.

\bibitem{Hall43} M. Hall, Projective planes. \emph{Trans. Amer. Math. Soc.} 54 (1943), 229–-277.

\bibitem{KeeSud10} P. Keevash and B. Sudakov, Pancyclicity of Hamiltonian and highly connected graphs. \emph{ J. Combinatorial Theory Ser. B} 100 (2010), 456--467.

\bibitem{KS03} M. Krivelevich and B. Sudakov, Sparse pseudo-random graphs are Hamiltonian. \emph{J. Graph Theory} 42 (2003), 17--33.

\bibitem{KriLeeSud10} M. Krivelevich, C. Lee and B. Sudakov, Resilient pancyclicity of random and pseudo-random graphs. \emph{SIAM J. of Discrete Math.} 24 (2010), 1-16.

\bibitem{LMV09}
F. Lazebnik, K. E. Mellinger, and O. Vega,  On the number of $k$-gons in finite projective planes. \emph{Note Mat.}, vol 29 suppl. 1 (2009) 135-152.

\bibitem{LeeSud12} C. Lee and B. Sudakov, Hamiltonicity, independence number, and pancyclicity. \emph{European Journal of Combinatorics} 33 (2012), 449--457.

\bibitem{MoorWil2009}   G.E.  Moorhouse and  J. Williford,  Embedding Partial Linear Spaces in Finite Translation Nets,  \emph{Journal of Geometry} 91 no.1-2 (2009), 73--83.

\bibitem{Pik12} O. Pikhurko,  A Note on the Tur\'an Function of Even Cycles. \emph{Proc. Amer. Math. Soc.}, 140 (2012) 3687--3992.

\bibitem{Schm89}  E. Schmeichel,  On the cycle structure of finite projective planes. \emph{Combinatorial Mathematics: Proceedings of the Third International Conference (New York, 1985)}, \emph{Ann. New York Acad. Sci.}, 555, \emph{New York Acad. Sci.}, New York, (1989) 368–-374.

\bibitem{Singer38} J. Singer,  A theorem in finite projective geometry and some applications to number theory, \emph{Trans. Amer. Math. Soc.}, 43 (1938), 377--385.

\bibitem{Thom87}  A. Thomason, Pseudorandom graphs, In: Random graphs'85 (Pozn\'an, 1985), North-Holland Math Stud 144, North Holland, Amsterdam (1987) 307–-331.

\bibitem{Vor12}  A.N. Voropaev, Counting $k$-gons in finite projective planes. Siberian Electronic Mathematical Reports. Vol 10 (2013), 241--270. (In Russian.)
\end{thebibliography}
\end{document}